\documentclass[12pt]{article}
\usepackage[margin=1in]{geometry}
\usepackage{mathtools}
\usepackage{amssymb}
\usepackage{amsthm}
\usepackage[english]{babel}
\usepackage{booktabs}
\usepackage{dynkin-diagrams}
\usepackage{titling}
\usepackage[colorlinks=true, allcolors=blue]{hyperref}

\title{A Cotangent Model for the Next-to-Minimal Orbit of $E_7$}
\author{Boming Jia}
\date{}
\hypersetup{
    pdftitle={A Cotangent Model for the Next-to-Minimal Orbit of E7},
    pdfauthor={Boming Jia},
    pdfsubject={Nilpotent orbits, cotangent bundles, and the Albert algebra},
    pdfkeywords={nilpotent orbit closures, cotangent bundles, Albert algebra,
        Bialynicki-Birula decompositions}
}

\newtheorem{theorem}{Theorem}[section]
\newtheorem{proposition}[theorem]{Proposition}
\newtheorem{lemma}[theorem]{Lemma}
\theoremstyle{definition}
\newtheorem{definition}[theorem]{Definition}

\DeclareMathOperator{\Spec}{Spec}
\DeclareMathOperator{\Ad}{Ad}
\DeclareMathOperator{\codim}{codim}
\DeclareMathOperator{\Herm}{Herm}

\begin{document}
\setlength{\droptitle}{-6em}
\maketitle
\vspace{-3.5em}

\begin{abstract}
Let \(\mathcal O_{2A_1}\) be the next-to-minimal nilpotent orbit of the
complex simple Lie algebra of type \(E_7\). Let \(\mathfrak p_7\) be the
maximal parabolic subalgebra corresponding to the simple root \(\alpha_7\),
and let \(\mathfrak u^+\) be its nilpotent radical. We prove that the
affinization \(\bigl(T^*(\mathcal O_{2A_1}\cap\mathfrak u^+)\bigr)^{\mathrm{aff}}\)
of the cotangent bundle of \(\mathcal O_{2A_1}\cap\mathfrak u^+\) is
isomorphic to the nilpotent orbit closure \(\overline{\mathcal O}_{2A_1}\)
as affine varieties.
\end{abstract}

\setlength{\abovedisplayskip}{10pt plus 2pt minus 5pt}
\setlength{\belowdisplayskip}{10pt plus 2pt minus 5pt}

\section{Introduction}

Let \(G\) be the simply connected complex simple algebraic group of type
\(E_7\). Let \(\mathfrak g\) be its Lie algebra. Fix a maximal torus
\(T\subset G\). Fix a Borel subgroup \(B\subset G\) containing \(T\).
Number the corresponding simple roots as in Bourbaki
\cite[Planche~VI]{Bourbaki}. Let \(P_7\supset B\) be the standard maximal
parabolic subgroup corresponding to the simple root \(\alpha_7\). Set
\[
    \mathfrak u^+
    \coloneqq
    \bigoplus_{\alpha\geq\alpha_7}\mathfrak g_\alpha.
\]
Then \(\mathfrak u^+\) is the nilpotent radical of
\(\operatorname{Lie}(P_7)\). The Lie algebra \(\mathfrak u^+\) is abelian
and \(27\)-dimensional. Let
\(\mathcal O_{2A_1}\subset\mathfrak g\) be the nilpotent orbit with
Bala--Carter label \(2A_1\). The orbit \(\mathcal O_{2A_1}\) is next-to-minimal.
For a variety \(Y\), define its \emph{affinization} by
\[
    Y^{\mathrm{aff}}\coloneqq\Spec\Gamma(Y,\mathcal O_Y).
\]

\begin{theorem}\label{main}
The variety \(\mathcal O_{2A_1}\cap\mathfrak u^+\) is smooth and quasi-affine.
The ring of global regular functions on its cotangent bundle is finitely
generated. There is an isomorphism of affine varieties
\[
    \bigl(T^*(\mathcal O_{2A_1}\cap\mathfrak u^+)\bigr)^{\mathrm{aff}}
    \cong\overline{\mathcal O}_{2A_1}.
\]
\end{theorem}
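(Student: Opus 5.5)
The plan is to realize \(T^*(\mathcal O_{2A_1}\cap\mathfrak u^+)\) as an open subset of \(\overline{\mathcal O}_{2A_1}\) whose complement has codimension at least two. Normality of \(\overline{\mathcal O}_{2A_1}\) would then give the isomorphism of affinizations, and finite generation follows automatically. We use the grading \(\mathfrak g=\mathfrak u^-\oplus\mathfrak l\oplus\mathfrak u^+\) attached to \(\alpha_7\), with \(\mathfrak l=\mathfrak e_6\oplus\mathbb C\). Here \(\mathfrak u^+\) is identified with the Albert algebra \(J=\Herm_3(\mathbb O)\otimes\mathbb C\), and \(\mathfrak u^-\cong J^*\) via the Killing form. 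The \(L\)-orbits on \(J\) are the rank strata. Rank-one elements lie in \(\mathcal O_{\min}\), and rank-two elements lie in \(\mathcal O_{2A_1}\); I would check this by exhibiting an \(\mathfrak{sl}_2\)-triple or a weighted Dynkin diagram for \(E_{11}+E_{22}\). Hence \(X:=\mathcal O_{2A_1}\cap\mathfrak u^+\) is the set of rank-exactly-two elements, namely \(\{N=0\}\setminus\{\text{rank}\le1\}\), where \(N\) is the cubic norm. This set is smooth, because the singular locus of the cubic \(\{N=0\}\) is the rank \(\le1\) locus (where \(dN\) vanishes). It is quasi-affine, being open in an affine hypersurface. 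It has dimension \(26=\tfrac12\dim\mathcal O_{2A_1}\), consistent with a Lagrangian.

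Next I would construct a map \(\mu\colon T^*X\to\mathfrak g\cong\mathfrak g^*\), built from the conformal (Kantor) action of \(\mathfrak g\) on \(J\). In that action \(\mathfrak l\) acts by linear vector fields, \(\mathfrak u^+\) by quadratic vector fields \(x\mapsto\{x,a,x\}\), and \(\mathfrak u^-\) by translations. The \(\mathfrak l\)-fields and the quadratic fields preserve the rank-two cone, so they restrict to vector fields on \(X\) and hence to fiber-linear functions on \(T^*X\). The \(\mathfrak u^-\)-direction is supplied by the degree-zero coordinate functions of \(x\). Thus \(\mu(x,\xi)\) has \(\mathfrak u^+\)-component \(x\), \(\mathfrak l\)-component \(\xi\circ(A\mapsto Ax)\), and \(\mathfrak u^-\)-component \(\xi\circ(a\mapsto\{x,a,x\})\). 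I would verify that \(\mu\) is Poisson, since the bracket relations are exactly those of the conformal realization, and that it is \(\mathbb C^*\)-equivariant for the appropriate weights. On the slice over the base point \(x_0=E_{11}+E_{22}\), a direct computation should show \(\mu(x_0,\xi)\in\overline{\mathcal O}_{2A_1}\), for instance by checking that the image is annihilated by the defining equations, or that it is conjugate into \(x_0+\mathfrak g^{f}\)-type elements. By \(L\)-equivariance and irreducibility, \(\mu(T^*X)\subset\overline{\mathcal O}_{2A_1}\).

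I then want \(\mu\) to be an open immersion. Injectivity should come from the fact that \(x\) is recovered as the \(\mathfrak u^+\)-component. For fixed \(x\), the covector \(\xi\in T^*_xX=J^*/\mathbb C\,dN_x\) is recovered from the \(\mathfrak l\)-component, since the \(\mathfrak l\)-fields span \(T_xX\): \(X\) is a single \(L\)-orbit. Étaleness then follows from equal dimensions, \(52=52\), together with injectivity of the differential, which holds because \(\mu\) is Poisson and \(T^*X\) is symplectic. Zariski's main theorem upgrades this to an open immersion. The image is \(U:=\{y\in\overline{\mathcal O}_{2A_1}:\ \mathrm{pr}_{\mathfrak u^+}(y)\ \text{has rank exactly }2\}\); to see this, use that over such \(x\) the fiber of \(\overline{\mathcal O}_{2A_1}\) is an affine space of dimension \(26\), matching the fiber of \(T^*X\).

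The main obstacle is the final step: showing \(\codim(\overline{\mathcal O}_{2A_1}\setminus U)\ge2\), together with the normality of \(\overline{\mathcal O}_{2A_1}\), for which I would cite the known normality of the next-to-minimal orbit closure in \(E_7\) (Broer; Sommers). Given both, \(\Gamma(T^*X,\mathcal O)=\Gamma(U,\mathcal O)=\mathbb C[\overline{\mathcal O}_{2A_1}]\). For the codimension estimate, I would stratify the complement by the \(P_7\)-stable condition \(\operatorname{rank}\mathrm{pr}_{\mathfrak u^+}(y)\le1\). Using the \(G\)-action and the Bialynicki-Birula decomposition for the \(\mathbb C^*\) defined by the grading element, I would bound each stratum as the intersection of \(\mathcal O_{2A_1}\) or \(\mathcal O_{\min}\) with \(\{\text{rank}\le1\}+\mathfrak l+\mathfrak u^-\). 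The aim is dimension at most \(50\), obtained by computing the attracting cells and fixed loci of the \(\mathbb C^*\)-action on each \(G\)-orbit of \(\overline{\mathcal O}_{2A_1}\). If the bound turns out to be only \(51\), meaning a divisor, I would instead show directly that functions on \(T^*X\) extend across that divisor, using the explicit grading by degree in \(\xi\).
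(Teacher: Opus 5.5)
Your architecture matches the paper's: an open copy of \(T^*X\) in the normal variety \(\overline{\mathcal O}_{2A_1}\) whose complement has codimension at least two, followed by Hartogs. However, both substantive steps fail as written.

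\textbf{The map \(\mu\) is not Poisson.} The obstruction is one of degree, and no rescaling repairs it. The Poisson bracket on \(T^*X\) lowers the degree in \(\xi\) by one. Your functions \(\langle\mu,b\rangle\), \(b\in\mathfrak u^-\), are coordinates of \(x\) and have degree \(0\). Your functions \(\langle\mu,A\rangle=\xi(Ax)\), \(A\in\mathfrak l\), have degree \(1\). Both choices are correct. But \(\langle\mu,a\rangle=\xi(\{x,a,x\})\), \(a\in\mathfrak u^+\), also has degree \(1\). Hence \(\{\langle\mu,a\rangle,\langle\mu,b\rangle\}=\pm b(\{x,a,x\})\) depends on \(x\) alone. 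On the other hand, \(\langle\mu,[a,b]\rangle\) is fiber-linear, and it is nonzero for suitable \(a,b\) because \([\mathfrak u^+,\mathfrak u^-]=\mathfrak l\) and the \(\mathfrak l\)-fields span \(T_xX\).

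The relations of the conformal realization hold only when \(\mathfrak u^-\) acts by translations, which are fiber-linear on \(T^*J\). Once you replace them by fiber-constant coordinate functions, the \(\mathfrak u^+\)-Hamiltonians must be homogeneous of degree two in \(\xi\). This matches the weight two of \(\mathfrak u^-\) under \(\phi_t\). Such Hamiltonians are not cotangent lifts of any vector fields on \(X\).

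The failure is visible already in the Siegel analogue \(\operatorname{Sym}_2\subset\mathfrak{sp}_4\), with \(X=\{vv^{\mathsf t}\}\) and cotangent coordinates \((v,w)\). There the correct lower-left block is a multiple of \(ww^{\mathsf t}\). Your recipe gives a multiple of \((v\cdot w)\,vv^{\mathsf t}\) instead. It therefore yields matrices of rank two, outside \(\overline{\mathcal O}_{\min}\), whenever \(v\cdot w=0\ne w\). Consequently the slice check, the \'etaleness argument (which relied on the Poisson property), and the open immersion all collapse.

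The paper never writes \(\mu\) down. From \(\phi_t=t\Ad(\bar\eta(t)^{-1})\) it gets \(\phi_t^*\omega_{\mathrm{KK}}=t\,\omega_{\mathrm{KK}}\). Hence \(J_2=\mathcal O\cap\mathfrak u^+\) is Lagrangian with normal weight one. Bia{\l}ynicki-Birula together with Fu's lemma then gives a \(\mathbb G_m\)-equivariant isomorphism \(\mathcal A=\mathcal O\cap\pi_+^{-1}(J_2)\cong T^*J_2\). Its inverse is the correct \(\mu\), and the quadratic \(\mathfrak u^-\)-component comes for free.

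\textbf{The codimension estimate is only a plan.} You rightly call it the crux, but no bound on \(\mathcal O_{2A_1}\cap\pi_+^{-1}(J_{\le1})\) is actually computed. (The other piece of the complement, \(\overline{\mathcal O}_{\min}\), has codimension \(18\) and is harmless.) Your fallback of extending functions across a possible divisor via the grading in \(\xi\) is not an argument.

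The idea you are missing is that a boundary divisor can be excluded without counting dimensions. Suppose a component \(D\) of \(\overline{\mathcal O}_{2A_1}\setminus\mathcal A\) were a divisor. The variety \(\overline{\mathcal O}_{2A_1}\) is normal (it has height two), \(\mathbb Q\)-factorial by Fu's classification, and has trivial Picard group by the positive-weight contraction argument. So some \(mD\) would equal \(\operatorname{div}(f)\) with \(f\) regular and invertible on \(\mathcal A\).

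But \(\mathcal A\cong T^*J_2\) has only constant units. A unit is constant on the affine-space fibers. Moreover, \(\Gamma(J_2,\mathcal O_{J_2})=\mathbb C[J_{\le2}]\) by Hartogs, since the hypersurface \(J_{\le2}\) is normal and \(J_{\le1}\) has codimension nine in it. This ring is a nonnegatively graded domain with degree-zero part \(\mathbb C\), so its units are constants. Hence \(f\) is constant, a contradiction.

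\textbf{A smaller omission.} To get \(X=J_2\) you must also exclude rank-three points from \(\mathcal O_{2A_1}\). These points are Richardson for \(P_7\), so their orbits have dimension \(2\dim\mathfrak u^+=54\ne52\).
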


Our proof adapts the strategy of Fu and Liu
\cite[Lemma~4.1 and Proof of Theorem~4.2]{FuLiu}. They combine a
\(\mathbb G_m\)-action with weights \(0,1,2\) and Fu's cotangent-bundle lemma
\cite[Lemma~3.7]{FuSymplectic}. The pair
\((\mathfrak e_7,\mathcal O_{2A_1})\) is not among the pairs treated there. We
identify the fixed locus with the rank-two stratum in the Albert algebra
and prove the boundary estimate needed for this additional case. For the
boundary estimate, we use Fu's \(\mathbb Q\)-factoriality and
positive-weight contraction arguments
\cite[Proposition~4.4 and Proof of Lemma~4.3]{FuQFactorial}.

\vspace{0.8em}
\textbf{Acknowledgments.}
The author was supported by NSFC Grant No.~12225108 and the Shuimu
Scholar Program at Tsinghua University. The author used ChatGPT
extensively during the preparation of this work. He would like to
thank Baohua Fu for useful discussions. He
would also like to thank the model GPT-5.6 Sol for generating an earlier
draft of this paper and collaborating with the author throughout the
revision process.

\section{The rank-two locus in the Albert algebra}

All algebraic background on composition algebras and Albert algebras used in
this section is taken from
\cite[Sections~1.2--1.4, 1.6, 1.8, 5.1, and~5.2]{SV}. We recall only the
definitions and normalizations needed below.

A \emph{unital composition algebra} over \(\mathbb C\) is a finite-dimensional
\(\mathbb C\)-algebra with identity, not necessarily associative, equipped with a
quadratic form \(n\) whose normalized polar form
\[
    \langle z,w\rangle
    \coloneqq\frac{n(z+w)-n(z)-n(w)}2
\]
is nondegenerate and for which
\[
    n(zw)=n(z)n(w).
\]
Its identity element is denoted by \(1\). Multiplicativity and
nondegeneracy imply \(n(1)=1\). An \emph{octonion algebra} is an
eight-dimensional unital composition algebra. An octonion algebra is
\emph{split} if its norm is isotropic, that is, if \(n(z)=0\) for some
nonzero \(z\). There is, up to isomorphism, a unique split octonion algebra
over \(\mathbb C\)
\cite[Theorem~1.8.1]{SV}. We denote it by \(\mathbb O_{\mathbb C}\).
The algebra \(\mathbb O_{\mathbb C}\) is alternative but not associative
\cite[Lemma~1.4.2 and Theorem~1.6.2]{SV}. We therefore parenthesize products
of three octonions.

Define the \emph{octonionic trace} and \emph{standard conjugation} by
\[
    \operatorname{tr}_{\mathbb O}(z)
    \coloneqq2\langle z,1\rangle,
    \qquad
    \bar z\coloneqq\operatorname{tr}_{\mathbb O}(z)1-z.
\]
By \cite[Lemma~1.3.1]{SV}, the trace and conjugation satisfy
\[
    \overline{zw}=\bar w\,\bar z,
    \qquad
    \overline{\bar z}=z,
    \qquad
    z\bar z=\bar z z=n(z)1,
    \qquad
    z+\bar z=\operatorname{tr}_{\mathbb O}(z)1.
\]
The definition and the last equality show that the fixed-point space of
conjugation is \(\mathbb C\cdot1\).

\begin{definition}
Let
\[
    J\coloneqq\Herm_3(\mathbb O_{\mathbb C})
    =\{x\in\operatorname{Mat}_3(\mathbb O_{\mathbb C})\mid x^*=x\},
\]
where \(x^*=(\bar x_{ji})\) is the conjugate transpose. The space \(J\) is
\(27\)-dimensional. Every element of \(J\) has a unique expression
\[
    x=
    \begin{pmatrix}
        a_1&z_3&\bar z_2\\
        \bar z_3&a_2&z_1\\
        z_2&\bar z_1&a_3
    \end{pmatrix},
    \qquad
    a_i\in\mathbb C,\quad z_i\in\mathbb O_{\mathbb C}.
\]
The usual row-by-column formula defines the binary products \(xy\) and
\(yx\). Each summand involves only a product of two octonions. Since
\((xy)^*=y^*x^*\), the element \((xy+yx)/2\) lies in \(J\). Hence
\[
    x\circ y\coloneqq\frac{xy+yx}{2}
\]
makes \(J\) a commutative algebra with identity \(I_3\). The product
\(\circ\) satisfies the Jordan identity
\[
    (x^2\circ y)\circ x=x^2\circ(y\circ x),
    \qquad x^2\coloneqq x\circ x.
\]
Thus \(J\) is the \emph{split Albert algebra}
\cite[Equations~(5.1)--(5.2) and Proposition~5.1.6]{SV}. Its trace is
\[
    \operatorname{tr}(x)\coloneqq a_1+a_2+a_3.
\]
For \(x=(a_i,z_i)\) and
\(y=(b_i,w_i)\), the \emph{trace form} is
\[
    T(x,y)\coloneqq\operatorname{tr}(x\circ y)
    =\sum_{i=1}^3a_ib_i
       +2\sum_{i=1}^3\langle z_i,w_i\rangle.
\]
The displayed formula and the nondegeneracy of \(\langle\ ,\ \rangle\)
show that \(T\) is nondegenerate. The \emph{cubic norm} on \(J\), also
called the \emph{determinant}, is
\[
    N(x)\coloneqq a_1a_2a_3-a_1n(z_1)-a_2n(z_2)-a_3n(z_3)
           +2\langle z_1z_2,\bar z_3\rangle.
\]
The displayed formula is \cite[Equation~(5.11)]{SV} specialized to the
matrix model above and written using our normalized polar form. For
\(x,y\in J\), set
\[
    dN_x(y)
    \coloneqq
    \left.\frac{d}{dt}\right|_{t=0}N(x+ty).
\]
Since \(T\) is nondegenerate, the linear functional \(dN_x\) has a unique
representative \(x^\#\in J\) such that
\begin{equation}\label{sharp-definition}
    T(x^\#,y)=dN_x(y)
    \qquad\text{for every }y\in J.
\end{equation}
We call \(x^\#\) the \emph{adjoint} of \(x\).
Since \(N\) is cubic, the map \(x\mapsto x^\#\) is a homogeneous quadratic
polynomial map.
By Euler's identity,
\[
    T(x^\#,x)=dN_x(x)=3N(x).
\]
Thus \(x^\#=0\) implies \(N(x)=0\).
For a diagonal element, the formulas for \(N\) and the adjoint become
\[
    N(\operatorname{diag}(a_1,a_2,a_3))=a_1a_2a_3,
    \qquad
    \operatorname{diag}(a_1,a_2,a_3)^\#
    =\operatorname{diag}(a_2a_3,a_1a_3,a_1a_2).
\]
The implication above shows that every element of \(J\) lies in exactly one
of the following four cases.
We define its \emph{Jordan rank} by
\[
    \operatorname{rk}_J(x)
    \coloneqq
    \begin{cases}
        0,&x=0,\\
        1,&x\ne0\text{ and }x^\#=0,\\
        2,&x^\#\ne0\text{ and }N(x)=0,\\
        3,&N(x)\ne0.
    \end{cases}.
\]
For \(0\leq r\leq3\), set
\[
    J_r\coloneqq\{x\in J\mid\operatorname{rk}_J(x)=r\},
    \qquad
    J_{\leq r}\coloneqq\bigcup_{s=0}^rJ_s.
\]
In particular, the two closed rank loci and the rank-two stratum are
\[
    \begin{aligned}
        J_{\leq1}&=\{x\in J\mid x^\#=0\},\\
        J_{\leq2}&=\{x\in J\mid N(x)=0\},\\
        J_2&=\{x\in J\mid N(x)=0,\ x^\#\ne0\}.
    \end{aligned}.
\]
Thus a diagonal element has Jordan rank equal to the number of its nonzero
diagonal entries. We use no other notion of rank below.
\end{definition}

\begin{lemma}\label{Albert-irreducible}
The cubic polynomial \(N\) is irreducible.
\end{lemma}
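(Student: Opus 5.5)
Suppose for contradiction that \(N=fg\) with \(f,g\) nonconstant polynomials on \(J\). Since \(N\) is homogeneous of degree \(3\), so are \(f\) and \(g\) (the factors of a homogeneous polynomial are homogeneous). After relabeling, \(f\) is a linear form \(\ell\) and \(g\) is a quadratic form \(q\). The plan is to restrict to a suitable linear subspace of \(J\) on which \(N\) becomes a polynomial we can see is irreducible, and derive a contradiction. A bare restriction is not enough, since restrictions of irreducible polynomials may factor. Instead we exploit the linear factor: \(N\) would vanish identically on the hyperplane \(\{\ell=0\}\). So it suffices to show that \(N\) does not vanish identically on any hyperplane, equivalently that the zero set \(J_{\leq2}=\{N=0\}\) contains no hyperplane.

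Write \(\ell(x)=T(c,x)\) for a unique nonzero \(c\in J\), using the nondegeneracy of \(T\). Assume \(N\) vanishes on \(c^{\perp}\). We will test against diagonal and off-diagonal elements. First, suppose some diagonal coefficient of \(c\), say \(c\)'s \(a_1\)-entry, is nonzero. Consider elements \(x=\operatorname{diag}(a_1,a_2,a_3)+(\text{octonion entries }0)\) with \(T(c,x)=\sum c_ia_i=0\). If the diagonal part of \(c\) is not proportional to a single coordinate vector, we can choose all \(a_i\ne0\) on that plane, giving \(N(x)=a_1a_2a_3\ne0\), a contradiction. Hence the diagonal part of \(c\) is (up to scalar) \(e_k\) or \(0\), and by the symmetry permuting indices (or just by treating cases) we may say it is \(\lambda e_1\) or zero.

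Next handle the octonion part. Take \(x\) with \(a_1=0\), \(a_2=a_3=1\), \(z_2=z_3=0\), and arbitrary \(z_1\). Then \(N(x)=-n(z_1)\cdot 0+a_2a_3a_1-\dots\). More usefully, take \(a_1=s,a_2=a_3=1\), \(z_1\) arbitrary, others \(0\): then \(N=s-s\,n(z_1)\) and \(T(c,x)=\lambda s+2\langle c_{z_1},z_1\rangle\). The condition \(T(c,x)=0\) is a single linear equation, and its solution set in \((s,z_1)\) always contains a point with \(s\ne0\) and \(n(z_1)\ne1\): a hyperplane in the \(9\)-dimensional \((s,z_1)\)-space cannot lie inside the union of \(\{s=0\}\) and the quadric \(\{n(z_1)=1\}\), by irreducibility considerations (or by direct choice). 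Similarly, varying which index carries the free diagonal entry and which \(z_i\) is free, we find a point of \(c^\perp\) where \(N\ne0\). Each case is an explicit two- or three-parameter check, so this produces the contradiction and shows that no linear factor exists.

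The main obstacle is organizing this case analysis cleanly, since \(c\) may be supported on both the diagonal and the off-diagonal entries. A slicker route, which I would try first, is to reduce to the diagonal sub-slice. One would use that every element of \(J\) is conjugate to a diagonal one under the automorphism group of \(N\), and that this group acts irreducibly on \(J\). Irreducibility implies that the set of hyperplanes contained in \(\{N=0\}\) would be a nonempty invariant set of hyperplanes, and their normals would span an invariant subspace, hence all of \(J\). But then \(N\) would be divisible by many independent linear forms, which is more than \(3\) and impossible for a cubic. However, that argument uses group theory not yet introduced, so I would fall back on the elementary slice computation above.
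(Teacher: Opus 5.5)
Your reduction is sound. Any nontrivial factorization of the cubic $N$ has a linear factor $\ell=T(c,\cdot)$, so it suffices to show that $N$ vanishes on no hyperplane $c^{\perp}$. Your diagonal step correctly forces the diagonal part of $c$ to be $\operatorname{diag}(\lambda,0,0)$ after relabeling, with $\lambda\neq0$: the case where the diagonal part is $0$ dies at once, since then $I_3\in c^{\perp}$ and $N(I_3)=1$.

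The gap is in the off-diagonal step, which fails as stated. Write $\gamma_1,\gamma_2,\gamma_3$ for the octonion entries of $c$. On your slice ($a_1=s$, $a_2=a_3=1$, $z_2=z_3=0$) the condition $T(c,x)=0$ reads $\lambda s+2\langle\gamma_1,z_1\rangle=0$. Suppose $\gamma_1=0$, for instance $c=\operatorname{diag}(1,0,0)$, so that $\ell=a_1$. Then this hyperplane is exactly $\{s=0\}$, and $N=s(1-n(z_1))$ vanishes identically on it. So the point with $s\neq0$ that you claim ``always'' exists does not exist, and your own irreducibility argument shows that this is precisely where it breaks.

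The remark about ``varying which index carries the free diagonal entry'' does not fix this automatically. For example, freeing $a_2$ while keeping $a_1=a_3=1$ imposes $\lambda+2\langle\gamma_2,z_2\rangle=0$, which has no solution when also $\gamma_2=0$. So the case analysis is not actually carried out, and the one explicit case you give is wrong.

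A single slice covers all cases. With the diagonal part of $c$ equal to $\operatorname{diag}(\lambda,0,0)$, take $x$ with $a_2=1$, $z_2\in\gamma_2^{\perp}$, and all other entries $0$; for a general nonzero diagonal position $k$, use any index $j\neq k$ in place of $2$. Then $T(c,x)=2\langle\gamma_2,z_2\rangle=0$ and $N(x)=-n(z_2)$. Since $\dim\gamma_2^{\perp}\geq7$ exceeds $4$, the maximal dimension of a totally isotropic subspace for the nondegenerate form $n$ on $\mathbb O_{\mathbb C}$, some such $x$ has $N(x)\neq0$. With this repair your argument is a genuinely different and fully elementary proof.

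The paper instead restricts $N$ to $V\cong\operatorname{Sym}_3(\mathbb C)$ (all $z_i\in\mathbb C\cdot1$) and proves that $\det$ is irreducible there via $A\mapsto AA^{\mathsf t}$. Your reason for rejecting restriction reverses the logic. If $N=FH$ with homogeneous factors of positive degree and $N|_V\neq0$, then $N|_V=F|_V\,H|_V$ is a nontrivial factorization. So irreducibility of one well-chosen restriction does suffice. The fact that restrictions of irreducible polynomials may factor only means the slice must be chosen well; the diagonal slice, where $N=a_1a_2a_3$, would not do.

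Your group-theoretic alternative is also valid. It needs the $E_6$-invariance of $N$ and the irreducibility of $J$ as an $E_6$-module, which this lemma does not otherwise use. The claim that every element is conjugate to a diagonal one plays no role in it.
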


\begin{proof}
Let \(V\subset J\) be the subspace defined by \(z_i\in\mathbb C\cdot1\) for
every \(i\). Then \(V\cong\operatorname{Sym}_3(\mathbb C)\). Under the
identification \(V\cong\operatorname{Sym}_3(\mathbb C)\), the restriction of
\(N\) is the ordinary determinant.

We first prove that the determinant on \(\operatorname{Sym}_3(\mathbb C)\) is
irreducible. The zero set of the determinant consists of the symmetric
matrices of rank at most two. Every symmetric bilinear form over \(\mathbb C\)
is congruent to \(\operatorname{diag}(I_r,0)\). Hence every symmetric matrix of rank at
most two has the form \(AA^{\mathsf t}\) for some
\(A\in\operatorname{Mat}_{3\times2}(\mathbb C)\). Conversely, every matrix
\(AA^{\mathsf t}\) has rank at most two. Thus the morphism
\[
    \operatorname{Mat}_{3\times2}(\mathbb C)\longrightarrow
    \operatorname{Sym}_3(\mathbb C),
    \qquad A\longmapsto AA^{\mathsf t},
\]
is surjective onto the determinant hypersurface. Since its source is
irreducible, that hypersurface is irreducible. At
\(\operatorname{diag}(1,1,0)\), the directional derivative along
\(\operatorname{diag}(0,0,1)\) is \(1\). Thus the determinant is not a
nontrivial power of the
irreducible equation defining the determinant hypersurface. Therefore the
determinant polynomial is irreducible.

Suppose that \(N=FH\) is a nontrivial factorization. Since \(N\) is
homogeneous, we may take \(F\) and \(H\) to be homogeneous of positive degree.
Since \(N|_V\ne0\), neither \(F|_V\) nor \(H|_V\) is zero. Both
restrictions remain homogeneous of positive degree because \(V\) is a
linear subspace through the origin. Thus \(N|_V\) is reducible, a
contradiction.
\end{proof}

\begin{proposition}\label{rank-two-geometry}
The stratum \(J_2\) is a smooth irreducible quasi-affine variety of
dimension \(26\). The hypersurface \(J_{\leq2}\) is normal and irreducible.
For \(0\leq r\leq3\),
\[
    \overline{J_r}=\bigcup_{s=0}^rJ_s,
\]
where the closures are taken in \(J\).
\end{proposition}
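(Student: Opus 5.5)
Since \(N\) is irreducible by Lemma~\ref{Albert-irreducible}, the hypersurface \(J_{\leq2}=\{N=0\}\) is irreducible of dimension \(26\), and its ideal is generated by \(N\) itself. The plan is to get everything else from the singular locus of this hypersurface together with the standard transitivity facts for the structure group.

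\textbf{Step 1: the singular locus.} By \eqref{sharp-definition}, the differential of \(N\) at \(x\) is \(T(x^\#,\cdot)\). Because \(T\) is nondegenerate, this differential vanishes exactly when \(x^\#=0\). Hence the singular locus of the reduced hypersurface \(\{N=0\}\) is \(J_{\leq1}\), and \(J_2=\{N=0\}\setminus J_{\leq1}\) is smooth of dimension \(26\). It is open in an irreducible affine variety, so it is irreducible and quasi-affine.

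\textbf{Step 2: the dimension of \(J_{\leq1}\).} For normality I would use Serre's criterion. A hypersurface is Cohen--Macaulay, so condition \(S_2\) holds automatically, and it remains to check \(R_1\), i.e.\ \(\codim_J J_{\leq1}\geq3\). For this I would invoke from \cite[Section~5]{SV} that every element of \(J\) can be brought to diagonal form \(\operatorname{diag}(a_1,a_2,a_3)\) by the group \(\mathrm{Str}(J)\) of linear maps preserving \(N\) up to scalar (via the Peirce decomposition or the ``diagonalization'' theorem for reduced Albert algebras), and that \(\mathrm{Str}(J)\) preserves each \(J_r\). Over \(\mathbb C\) we may scale the entries, so each \(J_r\) with \(r\le 2\) is a single orbit, namely that of \(\operatorname{diag}(1,\dots,1,0,\dots,0)\) with \(r\) ones. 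To get \(\dim J_1=17\), note that \(J_{\leq1}\) is the affine cone over the Cayley plane \(E_6/P_1\), which has dimension \(16\). Alternatively, compute the tangent space to the orbit of \(E_{11}\) directly in coordinates. This gives codimension \(10\geq3\), so \(R_1\) holds and \(J_{\leq2}\) is normal.

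\textbf{Step 3: the closure relations.} The case \(r=3\) is clear, since \(J_3\) is a nonempty open set in the irreducible space \(J\). The case \(r=2\) follows because \(J_2\) is dense in the irreducible \(J_{\leq2}\). For \(r=1\), the set \(J_{\leq1}\) is closed, and \(J_1\) is a single orbit whose closure contains \(0\) because it is a cone. The case \(r=0\) is trivial.

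The main obstacle is Step~2: pinning down \(\dim J_{\leq1}\), which requires importing the orbit structure. A fallback that avoids citing diagonalization is a direct bound. At \(x\in J_1\), consider the tangent space to \(J_{\leq1}\). Rank-one elements with \(a_1\neq0\) are determined by their first row \((a_1,z_3,\bar z_2)\): the equations \(x^\#=0\) force \(a_2=n(z_3)/a_1\), \(a_3=n(z_2)/a_1\), and \(z_1=\overline{z_3}\,\overline{z_2}/a_1\) up to conjugation conventions. This gives a dimension of at most \(17\) on that open chart, and symmetric charts with \(a_2\neq0\) or \(a_3\neq0\) are handled the same way. The elements of \(J_{\leq1}\) with all \(a_i=0\) satisfy \(n(z_i)=0\) and \(z_iz_j=0\)-type relations, and I would bound them separately by a similar chart argument.
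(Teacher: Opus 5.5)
Your proposal is correct and follows essentially the paper's own argument. Nondegeneracy of \(T\) and the Jacobian criterion identify the singular locus as \(J_{\leq1}\), the Cayley plane gives \(\dim J_{\leq1}=17\), Serre's criterion gives normality (\(S_2\) from Cohen--Macaulayness, \(R_1\) from codimension), and density gives the closure relations.

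Two minor deviations are both valid and slightly more economical. Your cone argument for \(\overline{J_1}=J_{\leq1}\) does not need irreducibility of \(J_{\leq1}\). Your chart fallback already suffices for \(R_1\) without the Cayley plane, because the part of \(J_{\leq1}\) with \(a_1=a_2=a_3=0\) lies in the \(21\)-dimensional locus where every \(n(z_i)=0\).
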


\begin{proof}
By Lemma~\ref{Albert-irreducible}, the hypersurface \(J_{\leq2}\) is
irreducible and has dimension \(26\). The diagonal elements
\(\operatorname{diag}(1,1,0)\) and \(\operatorname{diag}(1,1,1)\) show
that \(J_2\) and \(J_3\) are nonempty. The singular locus of
\(J_{\leq2}\) is \(\{x\in J\mid dN_x=0\}\). Since \(T\) is nondegenerate,
\eqref{sharp-definition} shows that
\[
    \{x\in J\mid dN_x=0\}=J_{\leq1}.
\]

By \cite[Sections~3.5 and~4.1]{LM}, the projectivization of
\(J_{\leq1}\) is the Cayley plane. By \cite[Proposition~2.1]{LM}, the
Cayley plane has dimension \(16\), so \(\dim J_{\leq1}=17\). Thus
\[
    J_2=J_{\leq2}\setminus J_{\leq1}
\]
is the smooth locus of \(J_{\leq2}\). Hence \(J_2\) is smooth, irreducible,
quasi-affine, and \(26\)-dimensional.

The hypersurface \(J_{\leq2}\) is Cohen--Macaulay and therefore
satisfies \(S_2\). The singular locus of \(J_{\leq2}\) has codimension
nine in \(J_{\leq2}\). Hence \(J_{\leq2}\) satisfies \(R_1\). By Serre's
criterion, \(J_{\leq2}\) is normal. Since \(J_2\) is dense in
\(J_{\leq2}\), its closure in \(J\) is \(J_{\leq2}\). The cone
\(J_{\leq1}\) is irreducible because its projectivization is the Cayley
plane. Hence \(\overline{J_1}=J_{\leq1}\). Finally, \(J_3\) is a nonempty
open subset of \(J\), so it is dense. Since \(J_0=\{0\}\), it is closed.
This proves all four closure relations.
\end{proof}

\section{The attracting set}

We adapt the attracting-set argument of Fu and Liu
\cite[Section~4.2, Lemma~4.1, and Proof of Theorem~4.2]{FuLiu} to the
pair \((\mathfrak g,\mathcal O_{2A_1})\). Their proof applies the \(\mathbb G_m\)-action
below and Fu's cotangent-bundle argument
\cite[Proof of Lemma~3.7]{FuSymplectic} to the inverse image of the fixed
locus. The two ingredients specific to the present case are the
description of \(\mathcal O_{2A_1}\cap\mathfrak u^+\) and a rank bound for the
projection onto \(\mathfrak u^+\). We prove both facts below.
Section~\ref{boundary-section} proves the global boundary
statement needed to identify the affinization of the cotangent bundle
with the orbit closure.

Let \(L\subset P_7\) be the standard Levi subgroup containing \(T\). We
identify the derived subgroup of \(L\) with the simply connected group of
type \(E_6\). If necessary, we compose the identification with the
nontrivial diagram automorphism so that the \(E_6\)-module \(\mathfrak u^+\)
is the Albert module \(J\). Let \(P_7^-\) be the opposite parabolic subgroup with
Levi subgroup \(L\). Put
\[
    \mathfrak l\coloneqq\operatorname{Lie}(L),
    \qquad
    \mathfrak u^-
    \coloneqq
    \operatorname{nilrad}(\operatorname{Lie}(P_7^-)).
\]
The Lie algebra \(\mathfrak g\) has the three-step grading
\begin{equation}\label{grading}
    \mathfrak g
    =
    \mathfrak u^+\oplus\mathfrak l\oplus\mathfrak u^-.
\end{equation}
The three summands have degrees \(1,0,-1\), respectively. Both
\(\mathfrak u^+\) and \(\mathfrak u^-\) are abelian and
\(27\)-dimensional. We use the following Bourbaki numbering.
\[
    \dynkin[edge length=.9cm,text style/.default={black,font=\normalsize},labels={\alpha_1,\alpha_2,\alpha_3,\alpha_4,\alpha_5,\alpha_6,\alpha_7}]{E}{7}
\]

For every root \(\beta\), fix a root vector
\[
    0\ne e_\beta\in\mathfrak g_\beta.
\]
Fix an \(E_6\)-equivariant linear isomorphism
\[
    \mathfrak u^+\xrightarrow{\sim}J.
\]
Via the chosen isomorphism, we regard the cubic form \(N\), the adjoint
\(x\mapsto x^\#\), and the rank strata as structures on \(\mathfrak u^+\).
By \cite[Section~1.3]{LM}, the group \(E_6\) preserves \(N\). The center
of \(L\) acts on \(J\) by scalar multiplication.

\begin{proposition}\label{intersection}
Under the chosen identification of \(\mathfrak u^+\) with \(J\),
\[
    \mathcal O_{2A_1}\cap\mathfrak u^+
    =J_2.
\]
In particular, \(\mathcal O_{2A_1}\cap\mathfrak u^+\) is smooth and quasi-affine
of dimension \(26\).
\end{proposition}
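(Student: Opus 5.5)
The plan is to show that the orbit of an element of \(\mathfrak u^+\) is read off from its Jordan rank. First I identify the nilpotent \(G\)-orbit of a representative of each stratum \(J_1,J_2,J_3\), and then I use the fact that each stratum is a single \(L\)-orbit.

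\textbf{Step 1: the strata are \(L\)-orbits.} The group \(L\) acts on \(\mathfrak u^+\cong J\) through \(E_6\) together with the central scalars. Since \(E_6\) preserves \(N\), it preserves \(x\mapsto x^\#\) up to the contragredient twist, so it preserves the vanishing of \(x^\#\). Scalars preserve both \(N=0\) and \(x^\#=0\). Hence \(L\) preserves each \(J_r\). The classical description of the \(E_6\)-orbits on the Albert algebra (cf.\ \cite{LM}, or \cite[Section~5]{SV}) gives that \(L\) acts transitively on each \(J_r\) for \(r=1,2,3\). The central torus removes the determinant level on \(J_3\), and the representatives are \(\operatorname{diag}(1,0,0)\), \(\operatorname{diag}(1,1,0)\) and \(\operatorname{diag}(1,1,1)\). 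If I want to avoid citing transitivity on \(J_2\), I can argue by dimension instead. The stabilizer in \(\mathfrak l\) of \(\operatorname{diag}(1,1,0)\) has dimension \(79-26=53\), which is the \(\mathfrak{so}_9\)-type stabilizer plus a complement, so the orbit is open in the irreducible \(26\)-dimensional \(J_2\) by Proposition~\ref{rank-two-geometry}. I would then still need to show that \(J_2\) contains no other orbit. I expect this to be the main obstacle if done by hand, and I would cite the classical result for it.

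\textbf{Step 2: the diagonal idempotents give mutually orthogonal root vectors.} I choose the identification so that \(E_{11},E_{22},E_{33}\in J\) correspond to root vectors \(e_{\beta_1},e_{\beta_2},e_{\beta_3}\) for strongly orthogonal roots \(\beta_i\geq\alpha_7\). In a three-step grading of Hermitian type, the maximal sets of strongly orthogonal roots in \(\mathfrak u^+\) have size equal to the rank \(3\) of the symmetric space \(E_7/E_6T\), and they correspond to frames of primitive idempotents. Since the roots are strongly orthogonal, \(e_{\beta_1}+\dots+e_{\beta_k}\) lies in a product of \(k\) commuting \(\mathfrak{sl}_2\)'s. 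It is therefore a nilpotent with Bala--Carter label \(kA_1\), and the \(A_1\)'s are long because \(E_7\) is simply laced. For \(k=3\), the relevant label is \((3A_1)''\), which is the orbit meeting \(\mathfrak u^+\) in an open set. Thus \(J_1\subset\mathcal O_{A_1}\), \(J_2\subset\mathcal O_{2A_1}\), and \(J_3\subset\mathcal O_{(3A_1)''}\), and the three orbits are distinct.

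\textbf{Step 3: conclusion.} Now \(\mathfrak u^+=J_0\sqcup J_1\sqcup J_2\sqcup J_3\), and each piece lies in a different \(G\)-orbit. The intersection \(\mathcal O_{2A_1}\cap\mathfrak u^+\) is therefore exactly the union of the strata mapping into \(\mathcal O_{2A_1}\), which is \(J_2\). Smoothness, quasi-affineness and dimension \(26\) then follow from Proposition~\ref{rank-two-geometry}.

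A useful check on Step 2 is dimensional. We have \(\dim\mathcal O_{2A_1}=52\), and an \(L\)-stable Lagrangian-type intersection \(\mathcal O\cap\mathfrak u^+\) should have dimension \(\tfrac12\dim\mathcal O\) when \(\mathfrak u^+\) is polarizing. This matches \(\dim J_2=26\), and similarly \(\dim J_1=17=\tfrac12\cdot34\) matches \(\dim\mathcal O_{A_1}=34\).
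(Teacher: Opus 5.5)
Your outline matches the paper's: the rank strata are the four $L$-orbits by \cite{LM}, and Bala--Carter theory is then applied to sums of orthogonal root vectors. However, Step~2 asserts rather than proves the one nontrivial link, namely that a sum of $k$ orthogonal degree-one root vectors has Jordan rank $k$.

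The identification $\mathfrak u^+\cong J$ is not yours to choose. $J$ is an irreducible $E_6$-module, so by Schur's lemma the equivariant isomorphism is unique up to a scalar. ``Choosing it so that $E_{11},E_{22},E_{33}$ go to $e_{\beta_1},e_{\beta_2},e_{\beta_3}$'' is therefore exactly the claim to be proved. The correspondence between frames and strongly orthogonal roots that you invoke concerns the Jordan structure $\{x,y,z\}=[[x,y],z]$ that the grading puts on $(\mathfrak u^+,\mathfrak u^-)$. To apply it, you would also need your $E_6$-equivariant map to carry that structure to the one on $J$, so that the two notions of rank agree. You give neither an argument nor a precise reference for this.

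The paper fills the gap with \cite{RRS}. The $L$-orbits in $\mathfrak u^+$ are classified by the number of orthogonal summands, and their closures are ordered by that number. Comparing this with $\overline{J_r}=J_{\le r}$ from Proposition~\ref{rank-two-geometry} forces the $r$-summand orbit to be $J_r$.

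You could instead repair Step~2 by elimination, without frames:
\begin{itemize}
\item The highest root $\theta$ has $\alpha_7$-coefficient one, and $e_\theta$ is a highest-weight vector of $\mathfrak u^+$. So its line lies in the closed orbit $\mathbb P(J_1)$, which gives $J_1\subset\mathcal O_{A_1}$.
\item $J_3$ is the open $P_7$-orbit, hence Richardson, with $G$-orbit of dimension $2\cdot27=54\neq52$.
\item The element $e_\theta+e_{\alpha_7}$ (note $\theta\perp\alpha_7$) lies in $\mathcal O_{2A_1}$. By the two previous points it must lie in $J_2$.
\end{itemize}
Transitivity on $J_2$ then gives $J_2\subset\mathcal O_{2A_1}$. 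This route also makes your unsupported label $(3A_1)''$ for $J_3$ unnecessary.

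Second, the inference ``lies in a product of $k$ commuting $\mathfrak{sl}_2$'s, therefore has label $kA_1$'' is not valid in general. In $D_4$, the mutually orthogonal roots $\varepsilon_1\pm\varepsilon_2,\varepsilon_3\pm\varepsilon_4$ give a nilpotent of partition $(3,3,1,1)$, whose label is $A_2$. You need $\{\pm\beta_1,\pm\beta_2\}$ to be the root system of a Levi subalgebra. Here it is: a root $a\beta_1+b\beta_2$ has $2a,2b\in\mathbb Z$ and $a^2+b^2=1$, so it equals $\pm\beta_i$, and a rationally closed subsystem is a Levi subsystem. The paper reaches the same conclusion differently, using transitivity of $W$ on ordered pairs of orthogonal roots to move $(\beta_1,\beta_2)$ to $(\alpha_1,\alpha_2)$.
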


\begin{proof}
Let \(U^+\subset P_7\) be the unipotent radical. Since \(U^+\) is
abelian, the exponential map is an \(L\)-equivariant isomorphism of
algebraic groups \(\mathfrak u^+\xrightarrow{\sim}U^+\). By
\cite[Propositions~2.8 and~2.13]{RRS}, every \(L\)-orbit in
\(\mathfrak u^+\) contains a sum of root vectors for pairwise
orthogonal degree-one roots. Two such sums lie in the same \(L\)-orbit
if and only if they have the same number of summands. By
\cite[Proposition~4.1]{LM}, the images of the three nonzero rank strata
\(J_1,J_2,J_3\) in \(\mathbb P(J)\) are precisely the three \(E_6\)-orbits.
Since the
center of \(L\) acts by scalars, the four
\(L\)-orbits in \(J\) are \(J_0,J_1,J_2,J_3\).
By \cite[Proposition~2.15(a)]{RRS}, the closure
of the \(L\)-orbit represented by \(r\) summands is the union of the
\(L\)-orbits represented by at most \(r\) summands. By
Proposition~\ref{rank-two-geometry}, the rank loci satisfy
\(\overline{J_r}=\bigcup_{s\leq r}J_s\). Hence the \(L\)-orbit represented
by \(r\) summands is \(J_r\).
The stratum \(J_2\) contains an element
\[
    e_2\coloneqq e_{\beta_1}+e_{\beta_2},
\]
where \(\beta_1\) and \(\beta_2\) are orthogonal degree-one roots.
Choose root \(\mathfrak{sl}_2\)-triples
\[
    (e_{\beta_i},\beta_i^\vee,f_{\beta_i})
    \qquad(i=1,2).
\]
The two triples commute. The Weyl group of \(G\) is transitive on ordered
pairs of orthogonal roots. Indeed, choose \(w_1\in W(G,T)\) such that
\(w_1(\beta_1)=\alpha_1\). The roots orthogonal to \(\alpha_1\) form a
root system of type \(D_6\). Its Weyl group is transitive on its roots.
Hence there is an element \(w_2\) in this Weyl group such that
\[
    w_2w_1(\beta_2)=\alpha_2.
\]
The element \(w_2\) fixes \(\alpha_1\). Choose a representative in
\(N_G(T)\) of \(w_2w_1\). It sends \(e_2\) to
\[
    c_1e_{\alpha_1}+c_2e_{\alpha_2}
    \qquad(c_1,c_2\in\mathbb C^\times).
\]
The element \(c_1e_{\alpha_1}+c_2e_{\alpha_2}\) is a regular nilpotent
element in the derived Lie algebra of the standard Levi subgroup corresponding to
\(\{\alpha_1,\alpha_2\}\). The element is therefore distinguished in that
Lie algebra. By the Bala--Carter classification, the \(G\)-orbit of \(e_2\)
has label \(2A_1\). The stratum \(J_1\) contains a root vector.
Since \(J_1\) is a single \(L\)-orbit, every point of \(J_1\) is
\(G\)-conjugate to a root vector. Hence \(J_1\) is contained in the
minimal nilpotent orbit \(\mathcal O_{A_1}\).

The unipotent radical of \(P_7\) acts trivially on \(\mathfrak u^+\).
Hence \(J_3\) is the open \(P_7\)-orbit in \(\mathfrak u^+\). Every point
of \(J_3\) is therefore a Richardson element for \(P_7\). The \(G\)-orbit
of every point of \(J_3\) has dimension
\[
    2\dim\mathfrak u^+=54
\]
by \cite[Theorem~7.1.1]{CM}. On the other hand,
\(\dim\mathcal O_{2A_1}=52\) by \cite[Section~8.4]{CM}. Therefore
\[
    J_3\cap\mathcal O_{2A_1}=\varnothing.
\]

The intersection \(\mathcal O_{2A_1}\cap\mathfrak u^+\) is \(L\)-stable.
It contains \(e_2\), whose \(L\)-orbit is \(J_2\). Thus
\[
    J_2\subset\mathcal O_{2A_1}\cap\mathfrak u^+.
\]
The remaining strata satisfy \(J_0=\{0\}\),
\(J_1\subset\mathcal O_{A_1}\), and
\(J_3\cap\mathcal O_{2A_1}=\varnothing\). Hence
\[
    \mathcal O_{2A_1}\cap\mathfrak u^+=J_2.
\]
By Proposition~\ref{rank-two-geometry}, \(J_2\) is smooth,
quasi-affine, and \(26\)-dimensional.
\end{proof}

Set
\[
    \mathcal O\coloneqq\mathcal O_{2A_1}.
\]
Let
\[
    \omega_{\mathrm{KK}}\in\Gamma(\mathcal O,\Omega^2_{\mathcal O})
\]
be the Kostant--Kirillov symplectic form. Following
\cite[Section~4.2 and Lemma~4.1]{FuLiu}, consider the
\(\mathbb G_m\)-action \(\phi\) on \(\mathfrak g\) with weights \(0,1,2\)
on \(\mathfrak u^+,\mathfrak l,\mathfrak u^-\), respectively:
\[
    \phi_t(x_++x_0+x_-)
    \coloneqq
    x_++t x_0+t^2x_-.
\]
Equivalently,
\[
    \phi_t=t\Ad(\bar\eta(t)^{-1}),
\]
where \(\bar\eta:\mathbb G_m\to G^{\mathrm{ad}}\) is the cocharacter whose
adjoint action has weights \(1,0,-1\) on the three summands of
\eqref{grading}. By \cite[Lemma~4.1]{FuLiu}, every nilpotent orbit is
stable under \(\phi\). Hence \(\mathcal O\) and \(\overline{\mathcal O}\) are
\(\phi\)-stable.

Let
\[
    \pi_+:\mathfrak g\longrightarrow\mathfrak u^+
\]
be the linear projection along
\(\mathfrak l\oplus\mathfrak u^-\). The projection \(\pi_+\) is
\(\mathbb G_m\)-equivariant for the trivial action on \(\mathfrak u^+\). For
\(y\in\mathfrak g\),
\[
    \pi_+(y)=\lim_{t\to0}\phi_t(y).
\]

\begin{lemma}\label{fixed-data}
The action \(\phi\) and the projection \(\pi_+\) satisfy
\[
    \mathcal O^{\mathbb G_m}=\mathcal O\cap\mathfrak u^+=J_2,
    \qquad
    \pi_+(\mathcal O)\subset J_{\leq2}.
\]
\end{lemma}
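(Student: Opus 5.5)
The two claims are of different nature, and I will handle them separately.

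\emph{Fixed locus.} I will compute the fixed points of \(\phi\) on all of \(\mathfrak g\) first. Write \(y=y_++y_0+y_-\) according to \eqref{grading}. Then \(\phi_t(y)=y\) for every \(t\in\mathbb G_m\) means \(t y_0+t^2y_-=y_0+y_-\) for all \(t\). Comparing weights forces \(y_0=0\) and \(y_-=0\), so \(\mathfrak g^{\mathbb G_m}=\mathfrak u^+\). Since \(\mathcal O\) is \(\phi\)-stable by \cite[Lemma~4.1]{FuLiu}, we get
\[
    \mathcal O^{\mathbb G_m}=\mathcal O\cap\mathfrak g^{\mathbb G_m}=\mathcal O\cap\mathfrak u^+ .
\]
Proposition~\ref{intersection} then identifies this set with \(J_2\). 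This part is routine.

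\emph{Image of the projection.} Take \(y\in\mathcal O\). Because \(\mathcal O\) is \(\phi\)-stable, each \(\phi_t(y)\) lies in \(\mathcal O\), and \(\pi_+(y)=\lim_{t\to0}\phi_t(y)\) therefore lies in \(\overline{\mathcal O}\). It also lies in \(\mathfrak u^+\), so
\[
    \pi_+(\mathcal O)\subset\overline{\mathcal O}\cap\mathfrak u^+ .
\]
Next, \(\overline{\mathcal O}\) is the union of \(\mathcal O\), the minimal orbit \(\mathcal O_{A_1}\), and \(\{0\}\), since \(\mathcal O_{2A_1}\) is next-to-minimal. Each of these is \(G\)-stable, so
\[
    \overline{\mathcal O}\cap\mathfrak u^+
    =(\mathcal O\cap\mathfrak u^+)\cup(\mathcal O_{A_1}\cap\mathfrak u^+)\cup\{0\}.
\]
For the middle term, the proof of Proposition~\ref{intersection} shows that every point of \(J_3\) lies in an orbit of dimension \(54\), whereas \(\dim\mathcal O_{A_1}=34\). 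Hence \(\mathcal O_{A_1}\cap\mathfrak u^+\) misses \(J_3\) and is contained in \(J_{\leq2}\). Combining this with \(\mathcal O\cap\mathfrak u^+=J_2\) gives
\[
    \overline{\mathcal O}\cap\mathfrak u^+\subset J_{\leq2},
\]
and hence \(\pi_+(\mathcal O)\subset J_{\leq2}\).

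A shortcut is available as well. Since \(J_3\) is the open \(P_7\)-orbit, it consists of Richardson elements, all lying in one orbit of dimension \(54\). That orbit is not contained in \(\overline{\mathcal O}\), because \(\dim\overline{\mathcal O}=52<54\). So \(\overline{\mathcal O}\cap J_3=\varnothing\) directly, with no need to list the orbits in the closure.

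The only point needing care is that the limit \(\pi_+(y)\) lands in \(\overline{\mathcal O}\) and not merely in some larger set. This follows because \(\overline{\mathcal O}\) is closed and \(\phi\)-stable, so the orbit map \(t\mapsto\phi_t(y)\), which extends to \(\mathbb A^1\), takes \(t=0\) into \(\overline{\mathcal O}\). I do not expect a real obstacle here; the main input is the \(\phi\)-stability from \cite[Lemma~4.1]{FuLiu}, which the paper already assumes.
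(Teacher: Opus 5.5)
Your proof is correct and follows the paper's argument. The fixed locus comes from \(\mathfrak g^{\mathbb G_m}=\mathfrak u^+\) together with Proposition~\ref{intersection}, and the bound on \(\pi_+(\mathcal O)\) comes from \(\pi_+(y)=\lim_{t\to0}\phi_t(y)\in\overline{\mathcal O}\cap\mathfrak u^+\) plus the fact that a rank-three point would force a \(54\)-dimensional Richardson orbit into the \(52\)-dimensional \(G\)-stable set \(\overline{\mathcal O}\), which is exactly your ``shortcut''.

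Your main route through \(\overline{\mathcal O}=\mathcal O\cup\mathcal O_{A_1}\cup\{0\}\) is an equally valid variant. The paper also proves the reverse inclusion \(J_{\leq2}\subset\overline{\mathcal O}\cap\mathfrak u^+\), which the lemma as stated does not need.
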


\begin{proof}
The fixed subspace of \(\phi\) on \(\mathfrak g\) is \(\mathfrak u^+\).
By Proposition~\ref{intersection}, we have
\[
    \mathcal O^{\mathbb G_m}=\mathcal O\cap\mathfrak u^+=J_2.
\]
The closed subset \(\overline{\mathcal O}\cap\mathfrak u^+\) contains \(J_2\).
By Proposition~\ref{rank-two-geometry}, we have
\[
    J_{\leq2}=\overline{J_2}
    \subset\overline{\mathcal O}\cap\mathfrak u^+.
\]
Suppose that \(\overline{\mathcal O}\cap\mathfrak u^+\) contains a rank-three
point. The intersection \(\overline{\mathcal O}\cap\mathfrak u^+\) is
\(L\)-stable. Hence
\[
    J_3\subset\overline{\mathcal O}\cap\mathfrak u^+.
\]
The unipotent radical of \(P_7\) acts trivially on \(\mathfrak u^+\).
Hence \(J_3\) is the open \(P_7\)-orbit in \(\mathfrak u^+\). Every point
of \(J_3\) is a Richardson element for \(P_7\). The \(G\)-orbit of every point of \(J_3\)
has dimension \(54\) by \cite[Theorem~7.1.1]{CM}. The \(G\)-stability of
\(\overline{\mathcal O}\) would then imply
\[
    \dim\overline{\mathcal O}\geq54.
\]
The inequality contradicts \(\dim\overline{\mathcal O}=52\)
\cite[Section~8.4]{CM}. Hence
\[
    \overline{\mathcal O}\cap\mathfrak u^+=J_{\leq2}.
\]
For \(y\in\mathcal O\), we have \(\phi_t(y)\in\mathcal O\) for \(t\ne0\). The
projection formula above therefore implies
\[
    \pi_+(y)\in\overline{\mathcal O}\cap\mathfrak u^+=J_{\leq2}.
    \qedhere
\]
\end{proof}

\begin{lemma}\label{symplectic-weights}
The action \(\phi\) satisfies
\[
    \phi_t^*\omega_{\mathrm{KK}}=t\omega_{\mathrm{KK}}.
\]
The fixed locus \(\mathcal O\cap\mathfrak u^+\) is Lagrangian in \(\mathcal O\), and
\(\mathbb G_m\) acts with weight one on its normal bundle.
\end{lemma}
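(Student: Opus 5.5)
We will compute the pullback of \(\omega_{\mathrm{KK}}\) directly from its definition, deduce that the fixed locus is isotropic of half dimension, and read off the normal weights from the tangent space decomposition. Recall that at \(x\in\mathcal O\) the tangent space is \(T_x\mathcal O=[\mathfrak g,x]\), and
\[
    \omega_{\mathrm{KK},x}([a,x],[b,x])=\kappa(x,[a,b]),
\]
where \(\kappa\) is the Killing form. Write \(\phi_t=t\,\Ad(g_t)\) with \(g_t\coloneqq\bar\eta(t)^{-1}\). Since \(\Ad(g_t)\) is a Lie algebra automorphism preserving \(\kappa\), it preserves \(\omega_{\mathrm{KK}}\). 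Scalar multiplication by \(t\) maps \(\mathcal O\) to itself, because \(\mathcal O\) is conical. Its differential at \(x\) sends \([a,x]\mapsto t[a,x]=[a,tx]\). Hence
\[
    (m_t^*\omega)_x([a,x],[b,x])=\omega_{tx}([a,tx],[b,tx])=\kappa(tx,[a,b])=t\,\omega_x([a,x],[b,x]).
\]
Composing the two maps gives \(\phi_t^*\omega_{\mathrm{KK}}=t\,\omega_{\mathrm{KK}}\). The only care needed is in passing from \(G^{\mathrm{ad}}\) to \(\Ad\), which is harmless.

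Next we show that the fixed locus is Lagrangian. Let \(F\coloneqq\mathcal O\cap\mathfrak u^+=J_2\), which is smooth of dimension \(26=\tfrac12\dim\mathcal O\) by Proposition~\ref{intersection} and \(\dim\mathcal O=52\). There are two arguments for isotropy. In the first, for \(x\in F\) and \(u,v\in T_xF\), the vectors \(u,v\) are fixed by \(d\phi_t\). Then
\[
    \omega_x(u,v)=(\phi_t^*\omega)_x(u,v)=t\,\omega_x(u,v)
\]
for all \(t\), so \(\omega_x(u,v)=0\). In the second, which is more concrete, \(T_xF\subset\mathfrak u^+\) consists of vectors \([a,x]\) and \([b,x]\) with \(a,b\in\mathfrak l\) up to \(\mathfrak u^+\)-terms. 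Then \(\kappa(x,[a,b])\) pairs \(\mathfrak u^+\) with \([\mathfrak l,\mathfrak l]\subset\mathfrak l\), and this pairing vanishes by the grading. I will present the first argument. Isotropy together with \(\dim F=\tfrac12\dim\mathcal O\) shows that \(F\) is Lagrangian.

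Finally we determine the normal weights. At \(x\in F\), the \(\mathbb G_m\)-action on \(T_x\mathcal O\) is linear with weights in \(\{0,1,2\}\), since \(T_x\mathcal O\subset\mathfrak g\) and \(d\phi_t\) is the restriction of the linear map \(\phi_t\). The weight-zero part equals \(T_xF\); this uses the smoothness of the fixed locus of a torus action, with \(F=\mathcal O^{\mathbb G_m}\) by Lemma~\ref{fixed-data}. The form \(\omega_x\) has weight one, so \(\omega_x(V_i,V_j)=0\) unless \(i+j=1\). Here \(V_i\) denotes the weight-\(i\) subspace. By nondegeneracy, \(\omega_x\) pairs \(V_0\) perfectly with \(V_1\), and \(V_2\) pairs with \(V_{-1}=0\). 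Therefore \(V_2=0\) and \(\dim V_1=\dim V_0=26\). So the normal space \(T_x\mathcal O/T_xF\cong V_1\) has pure weight one.

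The main obstacle is the identification \(V_0=T_xF\). I expect this to be routine: the fixed-point scheme of a linear torus action on the smooth variety \(\mathcal O\) is smooth with tangent space equal to the invariants, and it coincides with \(F\) set-theoretically. Since both are smooth, they agree.
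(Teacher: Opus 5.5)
Your proof is correct and follows the paper's argument. In both, \(\phi_t=t\Ad(\bar\eta(t)^{-1})\) scales \(\omega_{\mathrm{KK}}\) by \(t\); the weight decomposition \(T_x\mathcal O=V_0\oplus V_1\oplus V_2\) with \(\omega_{\mathrm{KK},x}(V_i,V_j)=0\) for \(i+j\neq1\) forces \(V_2=0\) and a perfect pairing of \(V_0\) with \(V_1\); and \(V_0=T_x(\mathcal O\cap\mathfrak u^+)\) comes from smoothness of torus fixed loci. The differences are minor: you check the scaling through the explicit formula \(\omega_{\mathrm{KK},x}([a,x],[b,x])=\kappa(x,[a,b])\), and you take the half-dimension from Proposition~\ref{intersection} and \(\dim\mathcal O=52\) rather than from the pairing. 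Your ``main obstacle'' can even be bypassed, since \(T_x(\mathcal O\cap\mathfrak u^+)\subset V_0\) and both have dimension \(26\).
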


\begin{proof}
The automorphism \(\Ad(\bar\eta(t)^{-1})\) preserves
\(\omega_{\mathrm{KK}}\). Scalar multiplication by \(t\) multiplies the
form by \(t\). Hence
\(\phi_t^*\omega_{\mathrm{KK}}=t\omega_{\mathrm{KK}}\).

Fix \(x\in\mathcal O_{2A_1}\cap\mathfrak u^+\) and write
\[
    T_x\mathcal O=V_0\oplus V_1\oplus V_2,
\]
where \(V_i\) is the weight-\(i\) subspace. Since \(x\) is fixed by
\(\phi\), the differential of \(\phi\) preserves \(T_x\mathcal O\). Under the
canonical identification \(T_x\mathfrak g=\mathfrak g\), the differential
has weights \(0,1,2\). Hence \(T_x\mathcal O\) has no other weights. For
\(v\in V_i\) and \(w\in V_j\), we have
\[
    t^{i+j}\omega_{\mathrm{KK},x}(v,w)
    = (\phi_t^*\omega_{\mathrm{KK}})_x(v,w)
    = t\omega_{\mathrm{KK},x}(v,w).
\]
Thus \(\omega_{\mathrm{KK},x}(V_i,V_j)=0\) unless \(i+j=1\).
The nondegeneracy of \(\omega_{\mathrm{KK},x}\) implies that \(V_2=0\)
and that \(V_0\) and \(V_1\) are perfectly paired.

The fixed locus of a torus acting on a smooth variety is smooth. Its tangent
space at a fixed point is the zero-weight subspace. By
Lemma~\ref{fixed-data}, the fixed locus of \(\phi\) on \(\mathcal O\) is
\(\mathcal O_{2A_1}\cap\mathfrak u^+\). Hence
\[
    V_0=T_x(\mathcal O_{2A_1}\cap\mathfrak u^+).
\]
The symplectic form vanishes on \(V_0\). Since \(V_0\) and \(V_1\) are
perfectly paired,
\[
    \dim V_0=\dim V_1=\frac12\dim\mathcal O.
\]
Thus \(\mathcal O_{2A_1}\cap\mathfrak u^+\) is Lagrangian. The normal space
\(T_x\mathcal O/V_0\) is the weight-one space \(V_1\). Since \(x\) was arbitrary,
\(\mathbb G_m\) acts with weight one on the normal bundle.
\end{proof}

Let \(\mathcal A\) be the \emph{attracting set} of the fixed locus
\(\mathcal O_{2A_1}\cap\mathfrak u^+\):
\[
    \mathcal A
    \coloneqq
    \bigl(\left.\pi_+\right|_{\mathcal O}\bigr)^{-1}
    (\mathcal O_{2A_1}\cap\mathfrak u^+).
\]
Equivalently, \(\mathcal A\) consists of those \(y\in\mathcal O\) for which
\[
    \lim_{t\to0}\phi_t(y)\in\mathcal O_{2A_1}\cap\mathfrak u^+.
\]
By Lemma~\ref{fixed-data}, the image of \(\pi_+|_{\mathcal O}\) is contained in
\(J_{\leq2}\). By Proposition~\ref{intersection}, the fixed locus is
\(J_{\leq2}\setminus J_{\leq1}\). Hence
\[
    \mathcal A
    =
    \mathcal O\setminus
    \bigl(\left.\pi_+\right|_{\mathcal O}\bigr)^{-1}(J_{\leq1}).
\]
The equality above shows that \(\mathcal A\) is open in \(\mathcal O\). The attracting
set contains the fixed locus and is therefore nonempty. Since \(\mathcal O\) is
irreducible, \(\mathcal A\) is dense in \(\mathcal O\).
Let \(\mathbb G_m\) act on \(T^*(\mathcal O_{2A_1}\cap\mathfrak u^+)\) by fiberwise
scalar multiplication:
\[
    t\cdot(x,\xi)=(x,t\xi).
\]
Proposition~\ref{cotangent-open} is the quasi-affine attracting-set
construction of Fu and Liu \cite[Proof of Theorem~4.2]{FuLiu}. Their proof
adapted Fu's argument \cite[Proof of Lemma~3.7]{FuSymplectic}.

\begin{proposition}\label{cotangent-open}
The morphism
\[
    \left.\pi_+\right|_{\mathcal A}:
    \mathcal A\longrightarrow\mathcal O_{2A_1}\cap\mathfrak u^+
\]
is a vector bundle. There is a \(\mathbb G_m\)-equivariant isomorphism over
\(\mathcal O_{2A_1}\cap\mathfrak u^+\):
\[
    \mathcal A\xrightarrow{\sim}
    T^*(\mathcal O_{2A_1}\cap\mathfrak u^+).
\]
\end{proposition}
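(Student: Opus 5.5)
The approach has three parts: a Białynicki-Birula decomposition gives the bundle structure, Lemma~\ref{symplectic-weights} makes it linear, and the symplectic form identifies it with the cotangent bundle, as in \cite[Proof of Lemma~3.7]{FuSymplectic}.

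\textbf{The bundle structure.} The action \(\phi\) on the smooth variety \(\mathcal O\) has fixed locus \(F\coloneqq\mathcal O_{2A_1}\cap\mathfrak u^+=J_2\), which is connected by Proposition~\ref{rank-two-geometry}. By definition, \(\mathcal A\) is the attracting cell of \(F\). The Białynicki-Birula theorem presupposes that limits exist, but here \(\lim_{t\to0}\phi_t(y)=\pi_+(y)\) exists in \(\mathfrak g\) for all \(y\). Moreover \(\mathcal A\) is the open subset of \(\mathcal O\) whose limits land in \(F\). So the local version of the theorem applies, for instance the quasi-affine version used by Fu and Liu, or Drinfeld's attractor results. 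It says that \(\pi_+|_{\mathcal A}\colon\mathcal A\to F\) is a Zariski-locally trivial fibration. Its fibres are affine spaces on which \(\mathbb G_m\) acts linearly with the weights of the normal bundle \(N_{F/\mathcal O}\).

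\textbf{Linearity.} By Lemma~\ref{symplectic-weights}, all of these weights equal one. A \(\mathbb G_m\)-equivariant automorphism of an affine space with all weights one is linear, since it is homogeneous of degree one. So the transition maps are linear, and \(\mathcal A\to F\) is a vector bundle equivariantly isomorphic to \(N_{F/\mathcal O}\) with scalar \(\mathbb G_m\)-action. Concretely, the isomorphism \(\mathcal A\cong N_{F/\mathcal O}\) sends \(y\) to the weight-one part of the tangent to the orbit curve at its limit, i.e.\ the derivative \(\tfrac{d}{dt}\big|_{t=0}\phi_t(y)\) modulo \(T F\).

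\textbf{Identification with \(T^*F\).} The Kostant--Kirillov form restricted to \(T_x\mathcal O=V_0\oplus V_1\) pairs \(V_1\cong N_x\) perfectly with \(V_0=T_xF\), again by Lemma~\ref{symplectic-weights}. This gives an algebraic isomorphism \(N_{F/\mathcal O}\xrightarrow{\sim}T^*F\), \(v\mapsto\omega_{\mathrm{KK}}(v,\cdot)|_{TF}\). Composing the two isomorphisms yields \(\mathcal A\cong T^*F\) over \(F\). It is \(\mathbb G_m\)-equivariant because both sides carry the weight-one scalar action on fibres.

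\textbf{Main obstacle.} The hard step is the first one: justifying the Białynicki-Birula decomposition on the quasi-affine (non-complete) variety \(\mathcal O\), and getting Zariski (not merely étale) local triviality. I would follow Fu--Liu. First, \(\mathcal A\) is smooth, as an open subset of \(\mathcal O\). The retraction \(\pi_+\) is \(\mathbb G_m\)-invariant and has linear limits, so Białynicki-Birula's argument for smooth varieties with a retraction applies. If needed, I would embed equivariantly into the affine space \(\mathfrak g\) and use the linear-algebra description of \(\pi_+\). With weights \(0,1,2\) on \(\mathfrak g\) and weight only one on the normal directions, the fibre \(\mathcal A\cap\pi_+^{-1}(x)\) is a smooth \(\mathbb G_m\)-stable closed subvariety of \(x+\mathfrak l\oplus\mathfrak u^-\) contracting to \(x\). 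It is therefore an affine space, namely the graph of a map \(\mathfrak l\text{-part}\mapsto\mathfrak u^-\text{-part}\) that is quadratic in the weight-one coordinates.
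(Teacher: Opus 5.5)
Your proposal is correct and follows essentially the same route as the paper. The Bia{\l}ynicki-Birula decomposition makes \(\pi_+|_{\mathcal A}\) a Zariski-locally trivial affine-space bundle over \(J_2\), the all-weights-one statement of Lemma~\ref{symplectic-weights} turns it into a vector bundle isomorphic to the normal bundle of \(J_2\) in \(\mathcal O\), and the Kostant--Kirillov pairing of \(V_1\) with \(V_0\) identifies that normal bundle with \(T^*J_2\); your explicit argument that weight-one equivariant transition maps are fibrewise linear, and your concrete description of the fibres, just unpack the step the paper delegates to Fu's argument.
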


\begin{proof}
Let
\[
    q:\mathcal A\longrightarrow\mathcal O_{2A_1}\cap\mathfrak u^+,
    \qquad
    q(y)\coloneqq\pi_+(y).
\]
By the Bia{\l}ynicki-Birula decomposition, the attraction morphism \(q\)
is a Zariski locally trivial affine-space bundle over \(J_2\). By
Lemma~\ref{symplectic-weights}, every positive normal weight is one. Fu's
argument \cite[Proof of Lemma~3.7]{FuSymplectic}, as applied by
Fu and Liu \cite[Proof of Theorem~4.2]{FuLiu}, shows that \(q\) is a
\(\mathbb G_m\)-equivariant vector bundle of rank \(26\). The action of
\(\mathbb G_m\) on its fibers is scalar multiplication. The zero section of \(q\)
is the inclusion
\[
    \mathcal O_{2A_1}\cap\mathfrak u^+
    \longrightarrow\mathcal A.
\]
Let \(\mathcal N_{\mathcal A}\) and \(\mathcal N_{\mathcal O}\) be the normal
bundles of \(J_2\) in \(\mathcal A\) and \(\mathcal O\), respectively. The
vector-bundle structure of \(q\) induces a \(\mathbb G_m\)-equivariant isomorphism
from \(\mathcal A\) to \(\mathcal N_{\mathcal A}\). The open immersion
\(\mathcal A\hookrightarrow\mathcal O\) induces a \(\mathbb G_m\)-equivariant
isomorphism from \(\mathcal N_{\mathcal A}\) to \(\mathcal N_{\mathcal O}\). By
Lemma~\ref{symplectic-weights}, the Kostant--Kirillov form induces a
\(\mathbb G_m\)-equivariant isomorphism from \(\mathcal N_{\mathcal O}\) to the
cotangent bundle of \(\mathcal O_{2A_1}\cap\mathfrak u^+\).
Thus
\[
    \mathcal A
    \cong\mathcal N_{\mathcal A}
    \cong\mathcal N_{\mathcal O}
    \cong T^*(\mathcal O_{2A_1}\cap\mathfrak u^+).
    \qedhere
\]
\end{proof}

\section{The boundary and the affinization}\label{boundary-section}

We use Fu's classification of exceptional nilpotent orbit closures whose
normalizations are \(\mathbb Q\)-factorial
\cite[Proposition~4.4]{FuQFactorial}. We also use his positive-weight
argument for the Picard group
\cite[Proof of Lemma~4.3]{FuQFactorial}. The proof of
Theorem~\ref{boundary-in-closure} applies these two results to
\(\overline{\mathcal O}_{2A_1}\).

\begin{lemma}\label{normality}
The orbit closure \(\overline{\mathcal O}_{2A_1}\) is normal.
\end{lemma}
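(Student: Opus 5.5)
The plan is to deduce the normality of \(\overline{\mathcal O}_{2A_1}\) from known results on normality of nilpotent orbit closures in exceptional Lie algebras, rather than from a direct computation. The cleanest route is to cite the theorem of Kraft--Procesi and Broer on the closure of the next-to-minimal orbit. Alternatively, one can cite the complete classification of normal nilpotent orbit closures in \(E_7\) due to Sommers and to Broer, which extends the work of Kraft--Procesi for classical types. In those tables every orbit of dimension at most \(52\) in \(E_7\) has normal closure; in particular this holds for \(\mathcal O_{A_1}\) and \(\mathcal O_{2A_1}\). I would state the result with a precise citation.

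If I wanted a self-contained argument instead, I would proceed in three steps.

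\emph{First}, I would describe the boundary. The closure order in \(E_7\) gives \(\overline{\mathcal O}_{2A_1}=\mathcal O_{2A_1}\cup\mathcal O_{A_1}\cup\{0\}\), and \(\dim\mathcal O_{A_1}=34\). The boundary therefore has codimension \(18\ge2\), so \(R_1\)-type regularity in codimension one is automatic.

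\emph{Second}, I would reduce normality to Serre's \(S_2\) condition, which here amounts to the equality
\[
    \mathbb C[\overline{\mathcal O}]=\mathbb C[\mathcal O].
\]
Concretely, I would show that restriction \(\mathbb C[\mathfrak g]\to\mathbb C[\mathcal O]\) is surjective. The candidate method is Broer's: \(\mathcal O_{2A_1}\) is the Richardson orbit of a suitable parabolic, and one would like a resolution \(G\times^{P}\mathfrak n\to\overline{\mathcal O}\) with vanishing higher cohomology, \(H^i(G/P,S^\bullet\mathfrak n^*)=0\) for \(i>0\), together with birationality.

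\emph{Third}, I would check birationality. This is where I expect the difficulty. For \(2A_1\) in \(E_7\), the component group is trivial, so birationality is plausible, but exhibiting the right parabolic (via the weighted Dynkin diagram of \(2A_1\), weight \(1\) at \(\alpha_6\)) is delicate. The cohomology vanishing is also delicate, and is essentially the content of Broer's work.

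For this reason the main obstacle is this cohomological step, and in practice I would not redo it. I would rely on the published normality result, citing Broer's and Sommers's classification, which lists \(\overline{\mathcal O}_{2A_1}\subset\mathfrak e_7\) as normal. The proof in the paper will then consist of this citation, possibly together with the remark that this normality is also recorded in Fu's discussion \cite{FuQFactorial} of exceptional orbit closures.
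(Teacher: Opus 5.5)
Your conclusion is correct, but the proposal does not yet prove it. Both of your routes have a concrete defect.

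\textbf{The citation route.} It rests on results that are misdescribed.
\begin{itemize}
\item There is no theorem of Kraft--Procesi and Broer giving normality of next-to-minimal orbit closures. Kraft--Procesi treat classical types only. Such a statement is also false in general: the closure of the $8$-dimensional next-to-minimal orbit $\tilde A_1$ in $G_2$ is not normal.
\item The classifications of Broer and Sommers that you seem to have in mind concern $F_4$ and $E_6$. For $E_7$ you never name a statement that can be checked.
\end{itemize}

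\textbf{The self-contained route.} It fails at its second step, because $\mathcal O_{2A_1}$ is not Richardson. Every proper parabolic $P$ of $E_7$ satisfies $\dim G/P\ge 27$, with the minimum at $P_7$, where $\dim\mathfrak u^+=27$. So every nonzero Richardson orbit has dimension at least $54>52$. The paper itself shows that the $P_7$-Richardson orbit is the $54$-dimensional orbit through $J_3$. Hence no map $G\times^P\mathfrak n\to\overline{\mathcal O}_{2A_1}$ of the kind you want exists.

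\textbf{The missing idea is the height.} The paper takes $e=e_{\beta_1}+e_{\beta_2}$ with $\beta_1\perp\beta_2$ and sets $h=\beta_1^\vee+\beta_2^\vee$.
\begin{itemize}
\item Since $E_7$ is simply laced, $\langle\delta,\beta_i^\vee\rangle\in\{-1,0,1\}$ for every root $\delta\ne\pm\beta_i$.
\item The roots $\pm\beta_i$ have $h$-weight $\pm2$, by orthogonality.
\end{itemize}
So every $h$-weight on $\mathfrak g$ lies in $[-2,2]$, and since $[h,e]=2e$, the orbit $\mathcal O_{2A_1}$ has height exactly two. Normality then follows from the general theorem that closures of height-two orbits are normal \cite[Proposition~4.1.1]{BenderPerrin}. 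This reduces the lemma to a two-line root computation plus one precise citation.

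\textbf{Repairing your cohomological sketch.} Replace $G\times^P\mathfrak n$ by the Jacobson--Morozov collapsing $G\times^{P_6}\mathfrak g(2)$, where height two gives $\mathfrak g_{\ge2}=\mathfrak g(2)$. This collapsing is birational onto $\overline{\mathcal O}$ for any nilpotent orbit, so the component group plays no role. You would still have to prove the cohomology vanishing yourself. The paper avoids that work by quoting the height-two theorem.
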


\begin{proof}
Let
\[
    e\coloneqq e_{\beta_1}+e_{\beta_2},
    \qquad
    h\coloneqq\beta_1^\vee+\beta_2^\vee,
\]
where \(e\) is the representative chosen in
Proposition~\ref{intersection}. The two root triples in the proof of that
proposition commute. Hence
\((e,h,f_{\beta_1}+f_{\beta_2})\) is an \(\mathfrak{sl}_2\)-triple. The
root system of \(G\) is simply laced. Thus, if a root \(\delta\) is distinct
from \(\pm\beta_1\) and \(\pm\beta_2\), then
each Cartan integer \(\langle\delta,\beta_i^\vee\rangle\) belongs to
\(\{-1,0,1\}\). If \(\delta=\pm\beta_i\), then its \(h\)-weight is
\(\pm2\) because \(\beta_1\) and \(\beta_2\) are orthogonal. The Cartan
subalgebra has \(h\)-weight zero. Hence every \(h\)-weight on
\(\mathfrak g\) lies between \(-2\) and \(2\). Since \([h,e]=2e\),
the orbit \(\mathcal O_{2A_1}\) has height \(2\).
By \cite[Proposition~4.1.1]{BenderPerrin}, the height-two orbit closure
\(\overline{\mathcal O}_{2A_1}\) is normal.
\end{proof}

\begin{theorem}\label{boundary-in-closure}
The complement of \(\mathcal A\) in \(\overline{\mathcal O}_{2A_1}\) has
codimension at least two. Hence restriction induces an isomorphism
\[
    \mathbb C[\overline{\mathcal O}_{2A_1}]\xrightarrow{\sim}
    \Gamma(\mathcal A,\mathcal O_{\mathcal A}).
\]
\end{theorem}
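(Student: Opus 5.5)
The plan is to split the complement \(\overline{\mathcal O}_{2A_1}\setminus\mathcal A\) into two pieces and bound each separately, then conclude with Hartogs' extension on the normal variety \(\overline{\mathcal O}_{2A_1}\) given by Lemma~\ref{normality}. The first piece is the orbit boundary \(\overline{\mathcal O}\setminus\mathcal O=\overline{\mathcal O}_{A_1}\). It has dimension \(34\) by \cite[Section~8.4]{CM}, so its codimension is \(18\). The second piece is
\[
    D\coloneqq\bigl(\left.\pi_+\right|_{\mathcal O}\bigr)^{-1}(J_{\leq1}),
\]
which is closed in \(\mathcal O\). The whole difficulty is to show that \(D\) has no irreducible component of codimension one in \(\mathcal O\).

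I would argue by contradiction, following Fu's \(\mathbb Q\)-factoriality and positive-weight strategy \cite[Proposition~4.4 and Proof of Lemma~4.3]{FuQFactorial}. Suppose \(E\subset D\) is a prime divisor of \(\mathcal O\), and let \(\overline E\) be its closure, a prime Weil divisor on \(\overline{\mathcal O}\). By \cite[Proposition~4.4]{FuQFactorial}, I expect \(\overline{\mathcal O}_{2A_1}\) in \(E_7\) to appear among the exceptional orbit closures that are \(\mathbb Q\)-factorial; its normalization is itself by Lemma~\ref{normality}. Hence \(m\overline E\) is Cartier for some \(m\geq1\). The scalar \(\mathbb C^\times\)-action on \(\overline{\mathcal O}\) has positive weights on the coordinate ring and contracts everything to the vertex \(0\). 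Fu's argument then shows that every line bundle on \(\overline{\mathcal O}\) is trivial. So \(m\overline E=\operatorname{div}(f)\) for some \(f\in\mathbb C[\overline{\mathcal O}]\). Since \(\overline E\) meets \(\mathcal O\), the function \(f\) is nonconstant.

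Restricting \(f\) to \(\mathcal A\) gives a function with no zeros, because \(\mathcal A\cap E=\varnothing\). So \(f|_{\mathcal A}\) is a unit on \(\mathcal A\). By Proposition~\ref{cotangent-open}, \(\mathcal A\cong T^*(J_2)\) is a vector bundle over \(J_2\). Every unit on it is therefore pulled back from a unit on \(J_2\), since units are constant on each affine-space fiber. By Proposition~\ref{rank-two-geometry}, \(J_2\) is the complement of the codimension-nine subset \(J_{\leq1}\) in the normal affine variety \(J_{\leq2}\). Hence \(\Gamma(J_2,\mathcal O)=\mathbb C[J_{\leq2}]\). This graded ring is positively graded with degree-zero part \(\mathbb C\), so its units are constants. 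Thus \(f|_{\mathcal A}\) is constant. Since \(\mathcal A\) is dense, \(f\) is constant, which contradicts \(\operatorname{div}(f)=m\overline E\neq0\). Therefore \(\codim_{\mathcal O}D\geq2\), and the full complement has codimension at least two in \(\overline{\mathcal O}_{2A_1}\).

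Finally, \(\overline{\mathcal O}_{2A_1}\) is normal by Lemma~\ref{normality}, and \(\mathcal A\) is open with complement of codimension at least two. Algebraic Hartogs then shows that restriction \(\mathbb C[\overline{\mathcal O}_{2A_1}]\to\Gamma(\mathcal A,\mathcal O_{\mathcal A})\) is an isomorphism. It is injective by density and surjective by extension across codimension two. The main obstacle is the input that \(\overline{\mathcal O}_{2A_1}\) is \(\mathbb Q\)-factorial, together with the triviality of its Picard group. I would first check that this case appears in Fu's list. If it does not, I would instead try to show directly that \(\operatorname{Cl}(\overline{\mathcal O})\) is torsion, via \(\pi_1(\mathcal O_{2A_1})\) and the standard sequence relating the class group of the orbit to characters of the stabilizer.
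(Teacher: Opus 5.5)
Your proposal is correct and follows essentially the same route as the paper. Both arguments use $\mathbb Q$-factoriality of the normal variety $\overline{\mathcal O}_{2A_1}$ from Fu's Proposition~4.4, triviality of its Picard group via the contracting scalar action, constancy of units on $\mathcal A\cong T^*J_2$ via $\Gamma(J_2,\mathcal O_{J_2})=\mathbb C[J_{\leq2}]$ (a graded \emph{domain} with degree-zero part $\mathbb C$; the domain property is what makes units constant), and then Hartogs; splitting off the orbit boundary $\overline{\mathcal O}_{A_1}$ first is harmless but unnecessary, since the contradiction argument applies verbatim to any codimension-one component of $\overline{\mathcal O}_{2A_1}\setminus\mathcal A$.
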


\begin{proof}
Suppose that an irreducible component \(D\) of
\(\overline{\mathcal O}_{2A_1}\setminus\mathcal A\) has codimension one.
By Lemma~\ref{normality}, the variety \(\overline{\mathcal O}_{2A_1}\) is
normal and therefore coincides with its normalization. By Fu's
classification \cite[Proposition~4.4]{FuQFactorial}, it is
\(\mathbb Q\)-factorial. Hence \(mD\) is Cartier for some
\(m>0\). Scalar dilation is a positive-weight action on
\(\overline{\mathcal O}_{2A_1}\) with the origin as its unique fixed point. By
Fu's positive-weight argument
\cite[Proof of Lemma~4.3]{FuQFactorial}, we have
\[
    \operatorname{Pic}(\overline{\mathcal O}_{2A_1})=0.
\]
Thus the Cartier divisor \(mD\) has zero class in the Picard group. The
divisor \(mD\) is therefore principal. Choose
\[
    f\in\mathbb C(\overline{\mathcal O}_{2A_1})^\times,
    \qquad
    \operatorname{div}(f)=mD.
\]
Since \(\operatorname{div}(f)=mD\) is effective and
\(\overline{\mathcal O}_{2A_1}\) is normal, the rational function \(f\) is
regular. Its divisor is supported on the complement of \(\mathcal A\),
so \(f|_{\mathcal A}\) is invertible.

We claim that every invertible regular function on \(\mathcal A\) is
constant. By Propositions~\ref{intersection} and~\ref{cotangent-open},
the variety \(\mathcal A\) is isomorphic to \(T^*J_2\). Let
\[
    p:T^*J_2\longrightarrow J_2,
    \qquad
    s_0:J_2\longrightarrow T^*J_2
\]
be the projection and the zero section, respectively. If \(u\) is an
invertible regular function on \(T^*J_2\),
then its restriction to each fiber of \(p\) is constant. Hence
\(u=p^*(s_0^*u)\).

We must show that \(s_0^*u\) is constant. By
Proposition~\ref{rank-two-geometry},
\[
    \codim_{J_{\leq2}}(J_{\leq2}\setminus J_2)
    =\codim_{J_{\leq2}}J_{\leq1}=26-17=9.
\]
By the Hartogs Lemma for the normal variety \(J_{\leq2}\),
\[
    \Gamma(J_2,\mathcal O_{J_2})=\mathbb C[J_{\leq2}].
\]
The ring \(\mathbb C[J_{\leq2}]\) is a nonnegatively graded domain with
degree-zero part \(\mathbb C\). Let \(a,b\in\mathbb C[J_{\leq2}]\) satisfy \(ab=1\).
The product of their highest-degree components is nonzero because the
ring is a domain. Hence both highest degrees are zero, and
\(a,b\in\mathbb C^\times\). Taking \(a=s_0^*u\) and \(b=(s_0^*u)^{-1}\), we
conclude that \(s_0^*u\) is constant. Thus \(u\) is constant.

The invertible function \(f|_{\mathcal A}\) is therefore constant. The
density of \(\mathcal A\) in \(\overline{\mathcal O}_{2A_1}\) implies that \(f\)
is constant. This contradicts \(\operatorname{div}(f)=mD\). Therefore the
complement of \(\mathcal A\) has codimension at least two.
By the Hartogs Lemma for normal varieties, restriction induces the
stated isomorphism.
\end{proof}

\begin{proof}[Proof of Theorem~\ref{main}]
By Proposition~\ref{intersection}, the variety
\(\mathcal O_{2A_1}\cap\mathfrak u^+\) is smooth and quasi-affine. By
Proposition~\ref{cotangent-open} and
Theorem~\ref{boundary-in-closure}, we have isomorphisms of
\(\mathbb C\)-algebras
\[
    \Gamma\bigl(T^*(\mathcal O_{2A_1}\cap\mathfrak u^+),
        \mathcal O_{T^*(\mathcal O_{2A_1}\cap\mathfrak u^+)}\bigr)
    \xrightarrow{\sim}
    \Gamma(\mathcal A,\mathcal O_{\mathcal A})
    \xleftarrow{\sim}
    \mathbb C[\overline{\mathcal O}_{2A_1}].
\]
Hence the ring of global regular functions on the cotangent bundle is
finitely generated. After taking spectra, we obtain an isomorphism
\[
    \bigl(T^*(\mathcal O_{2A_1}\cap\mathfrak u^+)\bigr)^{\mathrm{aff}}
    \cong\overline{\mathcal O}_{2A_1}
\]
of affine varieties.
\end{proof}

\section{The classification}

A nilpotent orbit closure \(\overline{\mathcal O}\) admits a \emph{cotangent
model} if there is a smooth quasi-affine variety \(X\) such that
\[
    \overline{\mathcal O}\cong(T^*X)^{\mathrm{aff}}
\]
as affine varieties.

We follow Fu and Liu
\cite[Section~4.2, Table~1, and Theorems~1.4 and~4.2]{FuLiu}. For a
parabolic subgroup \(P\), define
\[
    \mathfrak n_P
    \coloneqq
    \operatorname{nilrad}(\operatorname{Lie}(P)).
\]
For every pair in their Table~1, Fu and Liu set
\[
    X\coloneqq\mathcal O\cap\mathfrak n_P
\]
and proved that
\[
    (T^*X)^{\mathrm{aff}}\cong\overline{\mathcal O}
\]
as affine varieties.

\begin{theorem}
Let \(\mathcal O\) be a nonzero nilpotent orbit in a complex simple Lie algebra
\(\mathfrak g\). Then \(\overline{\mathcal O}\) admits a cotangent model if and
only if \((\mathfrak g,\mathcal O)\) occurs in the following table.
\begin{center}
\begingroup
\normalfont\small
\renewcommand{\arraystretch}{1.08}
\begin{tabular*}{\linewidth}{@{\extracolsep{\fill}}c l l l@{}}
\toprule
\textbf{Type}&\textbf{Orbit}&\textbf{Conditions}&
    \textbf{Parabolics} \(P\)\\
\midrule
\(A_{n-1}\)
    &\(\mathcal O_{(2^r,1^{n-2r})}\)
    &\(n\geq4,\quad1\leq r\leq\lfloor(n-2)/2\rfloor\)
    &\(P_k,\quad r<k<n-r\)\\
\addlinespace[2pt]
\(B_n\)
    &\(\mathcal O_{(2^2,1^{2n-3})}\)
    &\(n\geq3\)
    &\(P_1\)\\
\addlinespace[2pt]
\(C_n\)
    &\(\mathcal O_{(2^r,1^{2n-2r})}\)
    &\(n\geq2,\quad1\leq r\leq n-1\)
    &\(P_n\)\\
\addlinespace[2pt]
\(D_n\)
    &\(\mathcal O_{(2^{2r},1^{2n-4r})}\)
    &\(n\geq4,\quad1\leq r\leq\lfloor(n-2)/2\rfloor\)
    &\(P_{n-1},\ P_n,\ (\text{and }P_1\text{ if }r=1)\)\\
\addlinespace[2pt]
\(E_6\)
    &\(\mathcal O_{A_1}\)
    &\(\text{none}\)
    &\(P_1,\ P_6\)\\
\addlinespace[2pt]
\(E_7\)
    &\(\mathcal O_{A_1},\ \mathcal O_{2A_1}\)
    &\(\text{none}\)
    &\(P_7\)\\
\bottomrule
\end{tabular*}
\endgroup
\end{center}
For every listed parabolic \(P\), one may take
\[
    X=\mathcal O\cap\mathfrak n_P.
\]
\end{theorem}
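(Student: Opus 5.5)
The proof splits into the two implications.

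\emph{Sufficiency.} For every pair in the table except \((E_7,\mathcal O_{2A_1})\), the existence of a cotangent model with \(X=\mathcal O\cap\mathfrak n_P\) is exactly the content of Fu and Liu's Table~1 and \cite[Theorem~4.2]{FuLiu}. I will check that each row of the displayed table, including the ranges for \(r\), the bounds on \(n\), and the listed parabolics, matches a row of their table. For the new row \((E_7,\mathcal O_{2A_1})\) with \(P=P_7\), I will invoke Theorem~\ref{main}: it gives a smooth quasi-affine \(X=\mathcal O_{2A_1}\cap\mathfrak u^+\), where \(\mathfrak u^+=\mathfrak n_{P_7}\), together with an isomorphism \((T^*X)^{\mathrm{aff}}\cong\overline{\mathcal O}_{2A_1}\).

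\emph{Necessity.} Suppose \(\overline{\mathcal O}\cong(T^*X)^{\mathrm{aff}}\). I will take the obstructions Fu and Liu use in \cite[Theorem~1.4]{FuLiu} to rule out the remaining orbits, and reuse their classification as a black box. Their argument roughly runs as follows. The zero section and the fiber scaling give a \(\mathbb G_m\)-action on \(\overline{\mathcal O}\) of weight one on the symplectic form, and it contracts the variety to a Lagrangian subvariety. A \(\mathbb G_m\)-action on a nilpotent orbit closure scaling \(\omega_{\mathrm{KK}}\) with weight one must be of the form \(t\Ad(\bar\eta(t)^{-1})\) as in Section~3. The Lagrangian condition then forces an even grading \(\mathfrak g=\mathfrak u^+\oplus\mathfrak l\oplus\mathfrak u^-\), which means \(P\) has abelian nilradical. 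It also forces \(\mathcal O\cap\mathfrak u^+\) to be Lagrangian, i.e.\ \(\dim\mathcal O=2\dim(\mathcal O\cap\mathfrak u^+)\). This leaves finitely many checks in each type, using the known orbit structures of \(L\) on \(\mathfrak u^+\) (rank strata for \(A,C,D\), the quadric for \(B_n\) with \(P_1\), the Cayley plane for \(E_6\), and the Albert strata for \(E_7\)). The task is to confirm that their necessity argument lists \((E_7,\mathcal O_{2A_1})\) as the only candidate it left undecided, or else that their table omitted it only because their sufficiency method did not cover it. For \(E_7\), \(P_7\), the \(L\)-orbits are \(J_1\), \(J_2\), \(J_3\). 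The stratum \(J_3\) lies in a Richardson orbit of dimension \(54\neq2\cdot27\), so it fails the Lagrangian count. The strata \(J_1\) and \(J_2\) give \(\mathcal O_{A_1}\) and \(\mathcal O_{2A_1}\), with dimensions \(34=2\cdot17\) and \(52=2\cdot26\).

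\emph{Main obstacle.} I expect the hard step to be showing that the necessity direction really reduces to the abelian-nilradical, Lagrangian-intersection criterion. This needs care: an abstract \(X\) need not a priori be \(\mathcal O\cap\mathfrak n_P\). I would first try to quote Fu and Liu's necessity statement verbatim; I expect it to say that a cotangent model forces \(\mathcal O\) to meet \(\mathfrak n_P\) in a Lagrangian for some \(P\) with abelian nilradical. With that in hand, the only genuinely new input is Theorem~\ref{main}.
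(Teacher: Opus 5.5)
The sufficiency half of your proposal matches the paper. Fu--Liu's constructions \cite[Table~1 and Theorem~4.2]{FuLiu} cover every row except \((\mathfrak e_7,\mathcal O_{2A_1})\), and Theorem~\ref{main} covers that row. Do not expect a literal row-by-row match in the parabolic column: the paper obtains the mirror parabolics, such as \(P_6\) in \(E_6\) or \(P_{n-1}\) versus \(P_n\) in \(D_n\), from Dynkin-diagram automorphisms.

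The gap is the ``only if'' direction. You plan to quote a necessity theorem from Fu--Liu. In the proof of this theorem, however, the paper cites Fu--Liu only for the constructions. Every nonexistence statement comes instead from \cite[Theorems~1.1, 5.1, and~5.2]{JiaCotangent}:
\begin{itemize}
\item the complete list in the classical types, so in particular nothing occurs in \(A_1\) or \(A_2\);
\item the exclusion of every nonzero orbit in \(G_2\), \(F_4\), \(E_8\);
\item the reduction in \(E_6\) and \(E_7\) to \(\mathcal O_{A_1}\subset\mathfrak e_6\) and \(\mathcal O_{A_1},\mathcal O_{2A_1}\subset\mathfrak e_7\).
\end{itemize}
Citing those results is the missing step. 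With it, your proposal becomes the paper's proof.

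Your fallback sketch cannot replace that citation, for two reasons.

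First, its decisive exceptional check is wrong: \(54=2\cdot27\). Since \(J_3\) is open in \(\mathfrak u^+\), it is Lagrangian in the Richardson orbit \(G\cdot J_3\). So your criterion (abelian nilradical plus \(\dim\mathcal O=2\dim(\mathcal O\cap\mathfrak n_P)\)) does not exclude that orbit. The same failure occurs for \(\mathcal O_{(2^{n/2})}\subset\mathfrak{sl}_n\) with \(n\) even, for \(\mathcal O_{(2^n)}\subset\mathfrak{sp}_{2n}\), and for \(\mathcal O_{2A_1}\subset\mathfrak e_6\). Each passes your test, but only through an intersection that is dense in \(\mathfrak n_P\), and none appears in the table. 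A separate argument for the dense case is therefore required.

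Second, the reduction to your criterion is only asserted; it needs three unproved steps.
\begin{itemize}
\item The isomorphism \((T^*X)^{\mathrm{aff}}\cong\overline{\mathcal O}\) is only an isomorphism of varieties. Fiber scaling has weight one on the \emph{transported} canonical form of \(T^*X\), and you never relate that form to \(\omega_{\mathrm{KK}}\).
\item The rigidity claim, that every weight-one \(\mathbb G_m\)-action on \(\overline{\mathcal O}\) equals \(t\Ad(\bar\eta(t)^{-1})\), needs a genuine argument about automorphisms of \(\overline{\mathcal O}\). The paper only ever uses the easy direction: this particular \(\phi\) preserves \(\mathcal O\) and scales \(\omega_{\mathrm{KK}}\) by \(t\).
\item The abstract \(X\) must be identified with the fixed locus \(\mathcal O\cap\mathfrak n_P\) before any dimension count applies.
\end{itemize}
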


\begin{proof}
By \cite[Theorem~1.1]{JiaCotangent}, the table lists all pairs in the
classical types. Fu and Liu constructed the displayed models
for every classical pair
\cite[Table~1 and Theorem~4.2]{FuLiu}. Dynkin-diagram automorphisms give
the remaining listed parabolics. No pair occurs in types \(A_1\) or
\(A_2\). By \cite[Theorem~5.1]{JiaCotangent}, no nonzero orbit closure in
types \(G_2\), \(F_4\), or \(E_8\) admits a cotangent model.

By \cite[Theorem~5.2]{JiaCotangent}, the only exceptional nilpotent
orbits not excluded by the nonexistence results are
\[
    \mathcal O_{A_1}\subset\mathfrak e_6,
    \qquad
    \mathcal O_{A_1},\ \mathcal O_{2A_1}\subset\mathfrak e_7.
\]
Fu and Liu constructed the stated cotangent models for
\((\mathfrak e_6,\mathcal O_{A_1})\) and \((\mathfrak e_7,\mathcal O_{A_1})\)
\cite[Table~1 and Theorem~4.2]{FuLiu}. For each of these two pairs and
every corresponding parabolic listed above, their proof showed that
\(X=\mathcal O\cap\mathfrak n_P\) is smooth and quasi-affine and that
\[
    (T^*X)^{\mathrm{aff}}\cong\overline{\mathcal O}.
\]

For the remaining case \(\mathcal O_{2A_1}\subset\mathfrak e_7\), set
\(P=P_7\). Then \(\mathfrak n_P=\mathfrak u^+\). By
Proposition~\ref{intersection}, the variety
\[
    X=\mathcal O_{2A_1}\cap\mathfrak n_P
\]
is smooth and quasi-affine. By Theorem~\ref{main}, we have
\[
    (T^*X)^{\mathrm{aff}}\cong\overline{\mathcal O}_{2A_1}.
\]
By \cite[Theorems~1.1, 5.1, and~5.2]{JiaCotangent}, no other pair admits
a cotangent model. This proves the theorem.
\end{proof}

\bigskip
\noindent
Boming Jia

\noindent
Yau Mathematical Sciences Center,\\
Jingzhai 301, Tsinghua University,\\
Beijing 100084, China

\noindent
Email: \href{mailto:jiabm@tsinghua.edu.cn}{jiabm@tsinghua.edu.cn}

\end{document}